\newtheorem{theorem}{\bf Theorem}[section]
\newtheorem{proposition}[theorem]{\bf Proposition}
\newtheorem{lemma}[theorem]{\bf Lemma}
\newtheorem{corollary}[theorem]{\bf Corollary}
\newtheorem{example}[theorem]{\bf Example}
\newtheorem{remark}[theorem]{\bf Remark}
\newenvironment{proofofthm}[1]{\noindent{\it Proof of Theorem#1}}{\hfill$\square$\\\mbox{}}
\def\coor{{\mathcal{O}}}
\def\mc{{\mathbb{C}}}
\def\mn{{\mathbb{N}}}
\def\mz{{\mathbb{Z}}}
\def\ideal{{\mathcal{I}}}
\def\een{{\mathrm{End}}}
\def\uuu{{\mathcal{U}}}
\def\OR{{\mathrm{O}}}
\def\SP{{\mathrm{Sp}}}
\def\SO{{\mathrm{SO}}}
\def\SL{{\mathrm{SL}}}
\def\GL{{\mathrm{GL}}}
\def\cone{\mathcal{C}}
\title[On the cones of classical groups]
{On the cones of classical groups}
\author[M. Domokos]{M\'aty\'as Domokos}
\address{HUN-REN Alfr\'ed R\'enyi Institute of Mathematics,
Re\'altanoda utca 13-15, 1053 Budapest, Hungary,
ORCID iD: https://orcid.org/0000-0002-0189-8831}
\email{domokos.matyas@renyi.hu}
\thanks{Partially supported by the Hungarian National Research, Development and Innovation Office,  NKFIH K 138828.}
\subjclass[2020]{Primary 13A50; Secondary 13P10, 14L35, 20G05, 20G42}
\keywords{orthogonal group, symplectic group, Hilbert series, ideal of relations, quantum groups, Gr\"obner basis}
\begin{document}
\maketitle
%\begin{center} \dedicatory{Dedicated to Vesselin Drensky on his 75th birthday}\end{center}
\begin{abstract} 
The cone of a classical group $G$ is an affine $G\times G$-variety. The aim of this note is to initiate its combinatorial study 
in the cases when $G$ is the complex orthogonal or symplectic group.  
%(the case of the general linear group being well documented in the literature)
The coordinate ring of the cone of $G$  is a finitely generated commutative graded algebra. 
First the $G\times G$-module structure of its homogeneous components is determined.  
This is used to compute the Hilbert series of this coordinate ring in the cases when $G$ is the orthogonal group $\OR(3)$, $\OR(4)$, the special orthogonal group 
$\SO(4)$, and when $G$ is the symplectic group 
$\SP(4)$. It is concluded that the coordinate ring of the cone of $\OR(3)$ is not Koszul, hence the vanishing ideal of this cone has no quadratic Gr\"obner basis 
(although it is minimally generated by quadratic elements). 
\end{abstract}

\section{Introduction}

Quantum groups were originally defined as Hopf algebra deformations of  universal enveloping algebras of complex semisimple Lie algebras. 
A dual approach was taken by Faddeev, Reshetikhin and Tahtajan \cite{rtf}, who defined quantized coordinate rings of classical groups 
by  taking a noncommutative deformation of the coordinate ring of $G$.  This is a two-step process: first they define a matrix bialgebra (called the FRT-bialgebra), 
and then specialize to $1$ a central group-like element in it.  
Let $G$ be any of the classical groups $\SL(n)$, $\OR(n)$, or $\SP(n)$ (for $n$ even in the latter case). 
The FRT-bialgebra $\mathcal{A}_q(G)$ is an associative graded $\mc$-algebra generated by $n^2$ elements of degree $1$ 
(thought of as the coordinate functions on the space of $n\times n$ "quantum matrices") subject to a set of homogeneous quadratic relations depending on a 
non-zero parameter $q\in \mc$.  The algebra $\mathcal{A}_q(\SL(n))$ is the quantized coordinate ring of $n\times n$ matrices, and  
is thoroughly studied from combinatorial and ring theoretic aspects, see for example \cite{parshall-wang}, \cite{goodearl-lenagan}, \cite{brown-goodearl}, \cite{goodearl}. 
It is a noncommutative deformation of the coordinate ring of the space of $n\times n$ matrices, a polynomial algebra in $n^2$ indeterminates, whose 
$\GL(n)\times \GL(n)$-module structure is given by the Cauchy formula (see for example \cite[Section 11.5.1]{procesi}). 

For $G=\OR(n)$ or $G=\SP(n)$, the algebra $\mathcal{A}_q(G)$ is a deformation of the coordinate ring $\coor(\cone(G))$ 
of the cone $\cone(G)$ of $G$ (see e.g. \cite{domokos-lenagan}). 
Therefore the Hilbert series of the graded algebra $\mathcal{A}_q(G)$ coincides with the Hilbert series of $\coor(\cone(G))$. 
Moreover, the problem of finding a normal form for the elements of $\mathcal{A}_q(G)$ (e.g. 
giving a monomial  basis of $\mathcal{A}_q(G)$ together with the corresponding rewriting rules) is closely related to the similar task for $\coor(\cone(G))$. 
In this paper we initiate the combinatorial study of   $\coor(\cone(G))$. 

We shall work over the base field $\mc$ of complex numbers, and throughout the paper we assume that 
$n\ge 3$ is a positive integer. 
Recall that the orthogonal group is 
\[\OR(n)=\{M\in \mc^{n\times n}\mid M^TM=I\},\] 
the set of $n\times n$ matrices over $\mc$ whose transpose is their inverse. 
For $n=2m\ge 4$ even, the symplectic group is 
\[\SP(n)=\{M\in \mc^{n\times n}\mid M^TJM=J\}, \]
where $J$ is the $n\times n$ matrix obtained by glueing $m$ copies of 
$\begin{pmatrix}0 & 1\\ -1 & 0\end{pmatrix}$ along the main diagonal. 
Now let $G$ be any of $\OR(n)$ or $\SP(n)$. 
Then $G$ is a subset of $\mc^{n\times n}$ defined by polynomial equations. 

By the \emph{cone $\cone(G)$  of $G$} we mean the \emph{Zariski closure} in $\mc^{n\times n}$ of 
\begin{equation*}\label{eq:cone def} 
\{c M\mid c\in  \mc, \quad M\in G\}\subset \mc^{n\times n}.\end{equation*}
Note that the subset of  $\mc^{n\times n}$ in the above line is not Zariski closed in general, see Example~\ref{example:cone}.   
We have the embeddings 
\begin{equation*} \label{eq:embedding} G\hookrightarrow \cone(G)\hookrightarrow \mc^{n\times n}\end{equation*}  
of affine algebraic varieties. Restriction of functions induces the $\mc$-algebra surjections 
\begin{equation}\label{eq:comorphism} 
\coor(\mc^{n\times n})\stackrel{\varphi_1}\longrightarrow \coor(\cone(G))
\stackrel{\varphi_2}\longrightarrow \coor(G)\end{equation}
between the corresponding coordinate rings. 
Here $\coor(\mc^{n\times n})=\mc[x_{ij}\mid 1\le i,j\le n]$ is an $n^2$-variable polynomial algebra, where $x_{ij}$ is the function assigning to $M\in \mc^{n\times n}$ its $(i,j)$-entry. 
It is endowed with the standard grading. Since $\cone(G)$ is a cone, the surjection $\varphi_1$ transports the grading to $\coor(\cone(G))$. 
So denoting by $\coor(\cone(G))_d$ the degree $d$ homogeneous component of $\coor(\cone(G))$ we have 
$\coor(\cone(G))=\bigoplus_{d=0}^\infty \coor(\cone(G))_d$. 
The group $G\times G$ acts on $\mc^{n\times n}$ by 
\[(g,h)\cdot M=gMh^{-1} \ \mbox{ for }g,h\in G \mbox{ and } M\in \mc^{n\times n},\]
with matrix multiplication on the right hand side. The subvarieties $\cone(G)$ and $G$ are both $G\times G$-stable,  
and we have the induced $G\times G$-action on the corresponding coordinate rings. The $\mc$-algebra homomorphisms 
$\varphi_1$ and $\varphi_2$ are $G\times G$-equivariant. 

\subsection{The content of the paper} 
In Section~\ref{sec:peter-weyl} we determine the structure of the representation 
$G\times G$ on $\coor(\cone(G))$ (see Theorem~\ref{thm:O(G)_d GxG-module}). 
Using some known formulae for the dimensions of the irreducible representations of $G$, 
this allows us to deduce in Section~\ref{sec:hilbert series} the Hilbert series of the graded algebra 
$\coor(\cone(G))$ when $G$ is $\OR(3)$, $\OR(4)$, or $\SP(4)$, and the same is done for $\SO(4)$ in Section~\ref{sec:SO hilbert series}. 
Moreover, for the same groups $G$ we compute in Section~\ref{sec:UxU-invariants} also the Hilbert series of 
$\coor(\cone(G))^{U\times U}$, where $U$ is the unipotent radical of a Borel subgroup in $G$. 
The vanishing ideal   
\[\ideal(\cone(G))=\{f\in \coor(\mc^{n\times n})\mid f(c g)=0 \ \forall g\in G, \ \forall c \in \mc \}\] 
of $\cone(G)$ for $G=\OR(n)$ or $G=\SP(n)$  
(i.e. the kernel of $\varphi_1$ from \eqref{eq:comorphism}) was described by  Weyl \cite{weyl},  
who proved that these ideals are generated by explicitly given quadratic elements. 
In Section~\ref{sec:ideal} we present in modern language the proof of this result 
in the case of $\OR(n)$.  
As a consequence of the calculations in Section~\ref{sec:hilbert series} we observe that 
the algebra $\coor(\cone(\OR(3)))$ is not Koszul, hence the ideal $\ideal(\cone(\OR(3)))$ 
has no quadratic Gr\"obner basis.  In Section~\ref{sec:groebner} we 
present a Gr\"obner basis of  $\ideal(\cone(\OR(3)))$; in addition to the $10$ quadratic generators 
it has $5$ cubic elements. 

Several calculations in the paper were performed using the online CoCalc platform \cite{CoCalc}. 
%%%%%%%%%%%%%%%%%%%%%%%%%%%%%%%%%%%%

\section{The $G\times G$-module structure of $\coor(\cone(G))_d$}\label{sec:peter-weyl} 

Throughout this section $G$ stands for $\OR(n)$, or for $\SP(n)$ (where $n=2m$ is even in the latter case).  
The finite dimensional irreducible representations of $G$ (as a  linear algebraic group) are naturally indexed by a set $\Lambda(G)$ of partitions.  
By a \emph{partition} we mean a non-increasing finite sequence of integers. 
Denote by $\mathrm{Par}(n):=\{\lambda=(\lambda_1,\dots,\lambda_n)\mid \lambda_i\in \mn_0,\quad \lambda_1\ge\dots\ge\lambda_n\}$ the set of partitions with 
at most $n$ non-zero parts. 
Recall from \cite{procesi} or \cite{weyl} that we have 
\[\Lambda(\OR(n))=\{\lambda\in \mathrm{Par}(n)\mid |\{i\mid \lambda_i>0\}|+|\{i\mid \lambda_i>1\}|\le n\},\] 
whereas for $n=2m$ even we have 
\[\Lambda(\SP(n))=\mathrm{Par}(m).\] 
Set $\Lambda(G)_d:=\{\lambda\in \lambda(G)\mid \lambda_1+\dots+\lambda_n=d\}$. 
For $\lambda\in \Lambda(G)$ denote by $V_{\lambda}$ the underlying vector space of the irreducible representation of $G$ labeled by $\lambda$, endowed 
with the corresponding linear action of $G$. 
Note that $V_{(1,0,\dots,0)}\cong \mc^n$, endowed with the defininig representation of $G$ (given by multiplication of column vectors by matrices). 
We shall use the following well-known fact (see for example \cite[Section 11.3.1]{procesi}): 

\begin{proposition}\label{prop:tensorpowers} 
For $\lambda\in\Lambda(G)_d$ the $G$-module $V_{\lambda}$ occurs as a summand 
in the $d$th tensor power $\mathrm{T}^d(\mc^n)$ of $\mc^n$, and $V_{\lambda}$ is not a summand of $\mathrm{T}^k(\mc^n)$ 
for $k>d$ with $k-d$ odd,  or 
for $k<d$. 
\end{proposition}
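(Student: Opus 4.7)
My approach to this proposition would split into the two assertions: the positive statement that $V_\lambda$ appears in $\mathrm{T}^d(\mc^n)$ when $\lambda\in\Lambda(G)_d$, and the negative statements ruling out $V_\lambda$ as a summand of $\mathrm{T}^k(\mc^n)$ for $k<d$ or for $k$ of the wrong parity.

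The negative part splits naturally into a weight argument and a parity argument. Fix a maximal torus $T\subset G$ arranged so that the weights of the defining representation $\mc^n$ are $\pm\varepsilon_1,\dots,\pm\varepsilon_m$ (together with the weight $0$ when $n=2m+1$), where $m=\lfloor n/2\rfloor$. Every weight of $\mathrm{T}^k(\mc^n)$ is then of the form $\sum_{i=1}^m a_i\varepsilon_i$ with $\sum_i|a_i|\le k$. Since the highest weight of $V_\lambda$ for $\lambda\in\Lambda(G)_d$ is $\sum_i\lambda_i\varepsilon_i$ with $\lambda_i\ge 0$ and $\sum_i\lambda_i=d$, this weight must appear whenever $V_\lambda\subset\mathrm{T}^k(\mc^n)$, which forces $k\ge d$. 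For the parity obstruction I would use the central element $-I\in G$: it acts on $\mc^n$ by $-1$ and hence on $\mathrm{T}^k(\mc^n)$ by $(-1)^k$, while its value in the (rotational) torus parametrization with all angles equal to $\pi$ shows via the highest-weight character that it acts on $V_\lambda$ by $(-1)^d$. Schur's lemma propagates this scalar across $V_\lambda$, so a summand $V_\lambda\subset\mathrm{T}^k(\mc^n)$ would force $k\equiv d\pmod{2}$.

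The positive part is where the real work lies and where I expect the principal obstacle. The cleanest elementary route is induction on $d$ via the Pieri-type branching rule for the classical groups: for $\mu\in\Lambda(G)$, the $G$-module $V_\mu\otimes\mc^n$ decomposes as a direct sum of $V_\nu$ where $\nu$ ranges over partitions in $\Lambda(G)$ obtained from $\mu$ by adding or removing a single box of the Young diagram. Starting from $V_{(0,\dots,0)}\subset\mathrm{T}^0(\mc^n)$ and iterating, after $d$ tensorings every $V_\lambda$ with $\lambda\in\Lambda(G)_d$ is reached by some sequence of $d$ box-additions and hence embeds into $\mathrm{T}^d(\mc^n)$. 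The delicate combinatorial verification --- that one never leaves $\Lambda(G)$ during such a sequence and that every target $\lambda\in\Lambda(G)_d$ is actually attained --- is exactly the nontrivial content of the Pieri rule for $\OR(n)$ and $\SP(n)$. A conceptually cleaner but heavier alternative would be to invoke Weyl's traceless-tensor realization of $V_\lambda$ inside $\mathrm{T}^d(\mc^n)$ (equivalently the Schur--Weyl/Brauer duality statement), which yields both halves of the proposition simultaneously.
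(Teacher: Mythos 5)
The paper does not actually prove this proposition: it is quoted as a known fact with a reference to \cite[Section 11.3.1]{procesi} (ultimately Weyl's analysis of tensor powers of the defining representation), so your task was to supply an argument from scratch, and your sketch has a genuine gap precisely at the point where the orthogonal case is subtle. Your bound ``$V_\lambda\subset\mathrm{T}^k(\mc^n)$ forces $k\ge d$'' rests on the claim that the highest weight of $V_\lambda$ is $\sum_i\lambda_i\varepsilon_i$. For $G=\SP(2m)$ this is fine, since $\Lambda(\SP(2m))=\mathrm{Par}(m)$. But for $G=\OR(n)$ the label set $\Lambda(\OR(n))$ contains partitions with more than $\lfloor n/2\rfloor$ nonzero parts, and for those $\lambda$ the expression $\sum_i\lambda_i\varepsilon_i$ is not a weight of the group at all; the restriction of $V_\lambda$ to $\SO(n)$ has highest weight given by the \emph{associated} partition $\lambda^\sigma$ (first column length replaced by $n-\lambda_1'$), whose size is strictly smaller than $d$. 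Concretely, for $\OR(3)$ and $\lambda=(1,1,1)$ the module $V_\lambda$ is the determinant character, whose $\SO(3)$-restriction is trivial, so your weight argument gives no obstruction to its occurring in $\mathrm{T}^0$, $\mathrm{T}^1$ or $\mathrm{T}^2$; similarly for $(d-2,1,1,0)\in\Lambda(\OR(4))_d$, which restricts to $V_{(d-2,0,0,0)}$, your argument only yields $k\ge d-2$, and the parity obstruction does not exclude $k=d-2$ since $k-d$ is even. Excluding these cases requires distinguishing $V_\lambda$ from $V_{\lambda^\sigma}$ through the action of the full group $\OR(n)$ (the determinant twist), i.e.\ exactly the content of Weyl's description of which labels occur in which tensor powers --- the fact being asserted.

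Two further points. Your parity argument needs the scalar by which $-I$ acts on $V_\lambda$ to be $(-1)^d$; reading this off from ``the torus with all angles $\pi$'' again uses the identification of $\sum_i\lambda_i\varepsilon_i$ as the highest weight, which fails for the same $\lambda$ as above, and for $n$ odd $-I$ does not even lie in $\SO(n)$. The clean justification of that scalar is that $V_\lambda$ is \emph{by construction} (Weyl's traceless-tensor realization) a summand of $\mathrm{T}^d(\mc^n)$ --- but that is also your missing positive part, which you defer to the Pieri/branching rule without proof. So as it stands, every nontrivial piece of the proposition is either incorrect in the orthogonal case or outsourced to the very results (Weyl's construction, Brauer--Schur--Weyl duality for $\OR(n)$ and $\SP(n)$) that the paper cites in place of a proof. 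A repaired write-up should either invoke those references directly, as the paper does, or carry out the traceless-tensor/Brauer-algebra argument, which yields the occurrence in degree $d$, the non-occurrence for $k<d$, and the parity statement simultaneously.
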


The representation of $G\times G$ on $\coor(G)$ is described by the Peter-Weyl Theorem: 
\begin{proposition}\label{prop:peter-weyl} 
We have the following isomorphism of $G\times G$-modules: 
\[\mathcal{O}(G)\cong \bigoplus_{\lambda\in\Lambda(G)}V_{\lambda}\otimes V_{\lambda}.\]
\end{proposition}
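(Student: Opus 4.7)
The plan is to combine the algebraic Peter--Weyl theorem with the fact that for orthogonal and symplectic groups every irreducible rational representation is self-dual.

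First I would invoke the algebraic Peter--Weyl theorem for reductive linear algebraic groups: for any reductive $G$ over $\mc$, the $G\times G$-action on $\coor(G)$ induced by $((g,h)\cdot f)(x)=f(g^{-1}xh)$ yields a $G\times G$-equivariant decomposition
\[\coor(G)\cong \bigoplus_{\lambda\in\Lambda(G)}V_{\lambda}\otimes V_{\lambda}^{*}.\]
Each summand $V_{\lambda}\otimes V_{\lambda}^{*}$ is realized as the subspace of matrix coefficients of $V_{\lambda}$, via the map $v\otimes\varphi\mapsto(x\mapsto \varphi(x^{-1}v))$; cf.\ \cite[\S 10.1]{procesi}. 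Since $\OR(n)$ and $\SP(n)$ are reductive linear algebraic groups, the theorem applies to our $G$.

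Second, I would establish the self-duality $V_{\lambda}\cong V_{\lambda}^{*}$ as $G$-modules for every $\lambda\in\Lambda(G)$. The key observation is that the defining representation $\mc^{n}$ carries a non-degenerate $G$-invariant bilinear form (the standard symmetric form for $\OR(n)$, the form given by $J$ for $\SP(n)$), which produces a $G$-equivariant isomorphism $\mc^{n}\cong (\mc^{n})^{*}$. Therefore every tensor power $\mathrm{T}^{d}(\mc^{n})$ is self-dual. By Proposition~\ref{prop:tensorpowers}, $V_{\lambda}$ occurs as a direct summand of $\mathrm{T}^{d}(\mc^{n})$ with $d=\lambda_{1}+\cdots+\lambda_{n}$, and the same is true of $V_{\lambda}^{*}$ since $\mathrm{T}^{d}(\mc^{n})^{*}\cong\mathrm{T}^{d}(\mc^{n})$. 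Identifying $V_{\lambda}^{*}$ among the irreducible summands of this tensor power via highest weight theory (equivalently, noting that for these $G$ the longest element of the Weyl group acts as $-1$ on the character lattice, so that $-w_{0}\lambda=\lambda$) one concludes $V_{\lambda}^{*}\cong V_{\lambda}$.

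Substituting this isomorphism summand-by-summand into the Peter--Weyl decomposition yields the asserted formula. I expect the main obstacle to be a clean justification of the self-duality step: while it is standard, a self-contained argument that avoids highest weight theory would require either identifying the longest Weyl element explicitly for each of $\OR(n)$ and $\SP(n)$, or constructing a non-degenerate $G$-invariant bilinear form directly on $V_{\lambda}$ by contracting pairs of tensor factors in $\mathrm{T}^{d}(\mc^{n})$ using the invariant form on $\mc^{n}$ and showing that the restriction to the $V_{\lambda}$-isotypic component is non-degenerate. All other steps are invocations of well-established general theory.
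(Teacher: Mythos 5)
Your proposal is correct and takes essentially the same route as the paper: the paper's proof likewise combines the algebraic Peter--Weyl decomposition $\coor(G)\cong\bigoplus_{\lambda\in\Lambda(G)}V_\lambda\otimes V_\lambda^*$ with the self-duality $V_\lambda\cong V_\lambda^*$ of all irreducible representations of $\OR(n)$ and $\SP(n)$. The only difference is that the paper cites both ingredients (Procesi) without further argument, while you additionally sketch a justification of the self-duality step; that extra detail is consistent with the standard facts and does not change the approach.
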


\begin{proof} 
The Peter-Weyl Theorem says that $\mathcal{O}(G)\cong \bigoplus_{\lambda\in\Lambda(G)}V_{\lambda}\otimes V_{\lambda}^*$, 
where $V_\lambda^*$ stands for the dual of the representation $V_\lambda$  (see for example \cite[Theorems 8.3.2 and 8.7.2.3]{procesi}). 
Taking into account the fact that all representations of $G$ are self-dual, i.e. $V_{\lambda}\cong V_{\lambda}^*$ holds for all $\lambda\in\Lambda(G)$, 
we get the desired statement. 
\end{proof}

We mention that a basis of $\coor(\OR(n))$ consisting of so-called bideterminants is given in \cite{cliff}, extending for the orthogonal 
group the analogous theory for $\coor(\mc^{n\times n})$ (cf. \cite[Section 13.4]{procesi}, 
\cite{doubilet-rota-stein}, \cite{deconcini-eisenbud-procesi}). The case of $\coor(\SP(n))$ is treated in \cite{deconcini}. 

\begin{theorem}\label{thm:O(G)_d GxG-module} 
For $d=0,1,2,\dots$ we have the following isomorphisms of $G\times G$-modules: 
\[\coor(\cone(G))_d 
\cong \bigoplus_{k=0}^{\lfloor d/2\rfloor} \bigoplus_{\lambda\in\Lambda(G)_{d-2k}}V_{\lambda}\otimes V_{\lambda}, \]
where $\lfloor d/2\rfloor$ is the lower integer part of $d/2$. 
In particular, for $d\ge 2$ we have 
\[\coor(\cone(G))_d \cong \coor(\cone(G))_{d-2}\oplus \bigoplus_{\lambda\in \Lambda(G)_d} V_{\lambda}\otimes V_{\lambda}.\] 
\end{theorem}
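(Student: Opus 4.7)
The plan is to realize $\coor(\cone(G))_d$ as a $G\times G$-submodule of $\coor(G)$ and then read off its isotypic decomposition via Proposition~\ref{prop:peter-weyl} (Peter--Weyl) together with Proposition~\ref{prop:tensorpowers}. The first step is to show that the kernel of the composition $\varphi_2\circ\varphi_1\colon \coor(\mc^{n\times n})_d\to \coor(G)$ is precisely $\ideal(\cone(G))_d$. One inclusion is automatic from $G\subseteq\cone(G)$. Conversely, if $f\in\coor(\mc^{n\times n})_d$ vanishes on $G$, then homogeneity gives $f(cM)=c^df(M)=0$ for every $c\in\mc$ and $M\in G$, so $f$ vanishes on $\mc\cdot G$ and hence on its Zariski closure $\cone(G)$. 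Consequently $\varphi_2$ embeds $\coor(\cone(G))_d$ as a $G\times G$-submodule of $\coor(G)$, whose underlying subspace is exactly the image of the restriction map from $\coor(\mc^{n\times n})_d$.

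Next, I identify this image with the span of matrix coefficients of $\mathrm{T}^d(\mc^n)$. The key observation is that the restriction of a monomial $x_{i_1j_1}\cdots x_{i_dj_d}$ to $M\in G$ is the function $M\mapsto M_{i_1j_1}\cdots M_{i_dj_d}$, which is precisely the matrix coefficient of $\mathrm{T}^d(\mc^n)$ associated to the vector $e_{j_1}\otimes\cdots\otimes e_{j_d}$ and the dual vector $e_{i_1}^*\otimes\cdots\otimes e_{i_d}^*$. These span all matrix coefficients of $\mathrm{T}^d(\mc^n)$, and the resulting identification is $G\times G$-equivariant (the left factor acting on the dual tensor, the right factor on $\mathrm{T}^d(\mc^n)$). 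Writing $\mathrm{T}^d(\mc^n)=\bigoplus_\lambda m_\lambda V_\lambda$ as a $G$-module, standard matrix-coefficient theory (the content of the proof of Peter--Weyl used for Proposition~\ref{prop:peter-weyl}, together with the self-duality $V_\lambda\cong V_\lambda^*$) identifies this span in $\coor(G)$ with $\bigoplus_{\lambda\,:\,m_\lambda>0} V_\lambda\otimes V_\lambda$, where the $V_\lambda$-component is determined by occurrence rather than by the multiplicity $m_\lambda$. Proposition~\ref{prop:tensorpowers} then characterizes $\{\lambda:m_\lambda>0\}$ as $\bigsqcup_{k=0}^{\lfloor d/2\rfloor}\Lambda(G)_{d-2k}$, yielding the claimed decomposition. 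The ``in particular'' statement follows by pulling off the $k=0$ summand and reindexing $k\mapsto k+1$ in the remaining terms, so that they match the decomposition of $\coor(\cone(G))_{d-2}$.

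I do not expect a serious obstacle here: the whole argument hinges on the homogeneity identity $f(cM)=c^df(M)$ in the first step, which exploits the cone structure of $\cone(G)$ to reduce ``vanishing on $\cone(G)$ in a fixed degree'' to ``vanishing on $G$''. After that, the statement is a clean dictionary between occurrence of irreducibles in $\mathrm{T}^d(\mc^n)$ (controlled by Proposition~\ref{prop:tensorpowers}) and isotypic components of $\coor(G)$ (controlled by Proposition~\ref{prop:peter-weyl}), so the main subtlety, such as it is, lies in correctly tracking the $G\times G$-equivariance of the matrix-coefficient identification.
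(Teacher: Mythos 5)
Your proposal is correct and follows essentially the same route as the paper: embed $\coor(\cone(G))_d$ into $\coor(G)$ via $\varphi_2$ (injectivity coming from the homogeneity argument $f(cM)=c^d f(M)$), identify the image with the span of matrix coefficients of $\mathrm{T}^d(\mc^n)$, and then combine Proposition~\ref{prop:peter-weyl} with Proposition~\ref{prop:tensorpowers}. The only cosmetic difference is that you derive the second isomorphism by splitting off the $k=0$ term and reindexing, whereas the paper phrases it as induction on $d$; these are the same computation.
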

 
\begin{proof} 
Note that $\varphi_2(\varphi_1(x_{ij}))$ are the matrix elements of the defining representation of $G$ on $\mc^n$ 
(the maps $\varphi_1$ and $\varphi_2$ are defined in \eqref{eq:comorphism}). 
Therefore $\varphi_2(\coor(\cone(G))_d)$ is the $\mc$-linear span of the length $d$ products of the matrix elements of the defining representation of $G$, 
hence $\varphi_2(\coor(\cone(G))_d)$ is the space of matrix elements of the $d$-fold tensor power $\mathrm{T}^d(\mc^n)$ of the defining representation of $G$ on $\mc^n$. 
It follows by Proposition~\ref{prop:peter-weyl} that 
$\varphi_2(\coor(\cone(G))_d)$ is isomorphic to $\bigoplus V_{\lambda}\otimes V_{\lambda}$, 
where the direct sum ranges over the $\lambda\in \Lambda(G)$ such that 
$V_{\lambda}$ occurs as a direct summand in $\mathrm{T}^d(\mc^n)$. 
Hence by Proposition~\ref{prop:tensorpowers} we have 
$\varphi_2(\coor(\cone(G))_d)\cong \bigoplus_{k=0}^{\lfloor d/2\rfloor} \bigoplus_{\lambda\in\Lambda(G)_{d-2k}}V_{\lambda}\otimes V_{\lambda}$. 
The restriction to $\coor(\cone(G))_d$ of $\varphi_2$ is injective, since if a homogeneous polynomial in $\coor(\mc^{n\times n})$ vanishes on $G$, then it vanishes on $\cone(G)$ as well. 
Therefore we have $\coor(\cone(G))_d\cong \varphi_2(\coor(\cone(G))_d)$, finishing the proof of the first isomorphism. 
The second isomorphism follows from the first by induction on $d$. 
\end{proof} 

%%%%%%%%%%%%%%%%%%%%%%%%%%%%%%

\section{Some Hilbert series}\label{sec:hilbert series} 

Recall that the \emph{Hilbert series} of a graded algebra $A=\bigoplus_{d=0}^\infty A_d$  
(with finite dimensional homogeneous components $A_d$) is the formal power series 
\[\mathrm{H}(A;t):=\sum_{d=0}^\infty \dim_\mc(A_d)t^d\in \mz[[t]].\] 
For $G=\OR(n)$ or $G=\SP(n)$ introduce the following function $h_G:\mn_0\to\mn_0$: 
\begin{equation}\label{eq:h_G def} 
h_G(d):=\sum_{\lambda\in \Lambda(G)_d} \dim_\mc(V_\lambda)^2. 
\end{equation}

As a consequence of Theorem~\ref{thm:O(G)_d GxG-module} we get the following: 

\begin{corollary}\label{cor:hilbert series general} For $G=\OR(n)$ or $G=\SP(n)$ we have 
\[\mathrm{H}(\coor(\cone(G));t):=\sum_{d=0}^\infty \sum_{k=0}^{\lfloor d/2\rfloor}h_G(d-2k) t^d.\] 
\end{corollary}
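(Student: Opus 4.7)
The plan is to derive this Hilbert series formula as an essentially immediate consequence of Theorem~\ref{thm:O(G)_d GxG-module}, by passing from the $G\times G$-module decomposition of each homogeneous component $\coor(\cone(G))_d$ to the dimensions of those components.

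First I would unpack the definition of the Hilbert series, writing
\[\mathrm{H}(\coor(\cone(G));t)=\sum_{d=0}^\infty \dim_\mc\bigl(\coor(\cone(G))_d\bigr)\, t^d,\]
which reduces the problem to computing $\dim_\mc(\coor(\cone(G))_d)$ for each $d$. Theorem~\ref{thm:O(G)_d GxG-module} provides a direct sum decomposition
\[\coor(\cone(G))_d\cong \bigoplus_{k=0}^{\lfloor d/2\rfloor}\bigoplus_{\lambda\in\Lambda(G)_{d-2k}}V_{\lambda}\otimes V_{\lambda}\]
of $G\times G$-modules, so in particular this is an isomorphism of complex vector spaces. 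Taking complex dimensions on both sides and using $\dim_\mc(V_\lambda\otimes V_\lambda)=(\dim_\mc V_\lambda)^2$ together with the definition \eqref{eq:h_G def} of $h_G$ yields
\[\dim_\mc\bigl(\coor(\cone(G))_d\bigr)=\sum_{k=0}^{\lfloor d/2\rfloor}\sum_{\lambda\in\Lambda(G)_{d-2k}}(\dim_\mc V_\lambda)^2=\sum_{k=0}^{\lfloor d/2\rfloor}h_G(d-2k).\]
Substituting this into the series above gives the claimed formula.

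Since the corollary is a direct dimension-counting consequence of the module isomorphism already proved, there is no genuine obstacle; the only thing to check is that each homogeneous component is finite dimensional, so that the Hilbert series is well defined. This is clear because $\coor(\cone(G))_d$ is a quotient of the finite-dimensional space $\coor(\mc^{n\times n})_d$ of degree $d$ polynomials in $n^2$ indeterminates (equivalently, each $\Lambda(G)_{d-2k}$ is finite and each $V_\lambda$ with $\lambda\in\Lambda(G)$ is finite dimensional). With that observation, the computation above proves the corollary.
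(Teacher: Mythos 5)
Your proof is correct and follows exactly the route the paper intends: the corollary is stated as an immediate consequence of Theorem~\ref{thm:O(G)_d GxG-module}, and your dimension count of the decomposition, using the definition of $h_G$ in \eqref{eq:h_G def}, is precisely that argument. The finite-dimensionality remark is a fine (if unnecessary) addition, since $\coor(\cone(G))_d$ is indeed a quotient of $\coor(\mc^{n\times n})_d$.
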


\subsection{The case $G=\OR(3)$.} \label{subsec:hilbertseries O(3)}

\begin{theorem}\label{prop:O(3) Hilbert series} 
We have the equality  
\[\mathrm{H}(\coor(\cone(\OR(3)));t)=-t+\sum_{d=0}^{\infty}\binom{2d+3}{3}t^{2d}=
\frac{1+5t+5t^2-6t^3+4t^4-t^5}{(1-t)^4}.\]
\end{theorem}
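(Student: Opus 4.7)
The plan is to apply Theorem~\ref{thm:O(G)_d GxG-module} to reduce the Hilbert series computation to a sum indexed by $\Lambda(\OR(3))$, and then to evaluate that sum explicitly. Since Theorem~\ref{thm:O(G)_d GxG-module} says that each summand $V_\lambda\otimes V_\lambda$, with $\lambda\in\Lambda(G)_e$, appears as a constituent of $\coor(\cone(G))_d$ for every $d\in\{e,e+2,e+4,\dots\}$, its total contribution to the Hilbert series is $(\dim V_\lambda)^2\,t^e/(1-t^2)$. Summing over $\lambda$ yields
\[\mathrm{H}(\coor(\cone(G));t)=\frac{1}{1-t^2}\sum_{\lambda\in\Lambda(G)}(\dim V_\lambda)^2\,t^{\lambda_1+\cdots+\lambda_n}.\]

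For $G=\OR(3)$ the constraint $|\{i:\lambda_i>0\}|+|\{i:\lambda_i>1\}|\le 3$ admits exactly three families: $(a,0,0)$ for $a\ge 0$, $(a,1,0)$ for $a\ge 1$, and the single partition $(1,1,1)$. Using $\OR(3)\cong\SO(3)\times\{\pm I\}$ one reads off the dimensions: each $\SO(3)$-irreducible of dimension $2a+1$ (the harmonic polynomials of degree $a$) extends to $\OR(3)$ in exactly two ways, differing by a twist with the sign character $\det$. This gives $\dim V_{(a,0,0)}=\dim V_{(a,1,0)}=2a+1$ for $a\ge 1$, whereas $V_{(0,0,0)}$ and $V_{(1,1,1)}$ are the trivial and determinant representations, respectively.

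Substituting these values and writing $S(t):=\sum_{a\ge 0}(2a+1)^2t^a$, the three families contribute $S(t)$, $t(S(t)-1)$ and $t^3$ respectively, with total $(1+t)S(t)-t(1-t^2)$. Dividing by $1-t^2=(1-t)(1+t)$ collapses the Hilbert series to
\[\mathrm{H}(\coor(\cone(\OR(3)));t)=\frac{S(t)}{1-t}-t=\sum_{d\ge 0}\Big(\sum_{a=0}^{d}(2a+1)^2\Big)t^d-t.\]
The classical identity $\sum_{a=0}^{d}(2a+1)^2=\binom{2d+3}{3}$ (a short induction, or a consequence of $\sum_{k=1}^{2d+1}k^2-4\sum_{k=1}^{d}k^2$) then yields the first claimed expression. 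Expanding $\binom{2d+3}{3}$ as a cubic polynomial in $d$ and invoking the standard closed forms for $\sum_{d\ge 0}d^kt^d$ ($k=0,1,2,3$) converts it to the displayed rational function.

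The main obstacle is the second paragraph, namely the careful determination of $\Lambda(\OR(3))$ together with the correct dimensions $\dim V_\lambda$. The family $(a,1,0)$ is the only subtle point, as its presence rests on the fact that, in the Weyl labeling, twisting by $\det$ sends $(a,0,0)$ to $(a,1,0)$; once this identification is granted, everything else reduces to routine formal-power-series manipulations.
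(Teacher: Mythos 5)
Your argument is correct and follows essentially the same route as the paper: the decomposition of Theorem~\ref{thm:O(G)_d GxG-module} (equivalently Corollary~\ref{cor:hilbert series general}), the same enumeration of $\Lambda(\OR(3))$ into the families $(a,0,0)$, $(a,1,0)$, $(1,1,1)$ with dimensions $2a+1$, $2a+1$, $1$, and the identity $\sum_{a=0}^{d}(2a+1)^2=\binom{2d+3}{3}$; only the bookkeeping differs (a global factor $1/(1-t^2)$ instead of the paper's $h_{\OR(3)}(d)$ and partial sums, and expanding $\binom{2d+3}{3}$ as a polynomial in $d$ instead of the paper's derivative trick at the end). Note that your intermediate expression $-t+\sum_{d\ge 0}\binom{2d+3}{3}t^{d}$ is the correct one, consistent with the stated rational function and with the paper's own proof; the exponent $t^{2d}$ in the theorem's displayed middle term is a typo in the paper.
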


\begin{proof} We have $\Lambda(\OR(3))_d=\{(d,0,0),(d-1,1,0)\}$, except for $d=3$ or $d=1$, when $\Lambda(\OR(3))_1=\{(1,0,0)\}$ and $\Lambda(\OR(3))_3=\{(3,0,0),(2,1,0),(1,1,1)\}$. 
Moreover, $V_{(d,0,0)}$ can be identified with the space of $3$-variable spherical harmonics of degree $d$ (see for example \cite[Section 5.2.3]{goodman-wallach}), 
and so $\dim_\mc(V_{(d,0,0)})=\binom{d+2}{2}-\binom{d}{2}=2d+1$. Moreover, the restriction to the special orthogonal group  $\SO(3)$ of the representation on $V_{(d-1,1,0)}$ is isomorphic to 
$V_{(d-1,0,0)}$, whence 
$\dim_\mc(V_{(d-1,1,0)})=\dim(V_{(d-1,0,0)})=2(d-1)+1=2d-1$. 
Furthermore, $V_{(1,1,1)}$ can be identified with the third exterior power of $\mc^3$, so 
$\dim_\mc(V_{(1,1,1)})=1$. 
Thus we have 
\[h_{\OR(3)}(d)=\begin{cases} (2d-1)^2+(2d+1)^2=8d^2+2 & \mbox{ for }d\notin \{0,1,3\}; \\ 
1^2 & \mbox{ for }d=0; \\
3^2  & \mbox{ for }d=1; \\ 
1^2+5^2+7^2 & \mbox{ for }d=3.
\end{cases}\]
It follows by Corollary~\ref{cor:hilbert series general} that 
that for $d\ge 2$ or $d=0$ we have 
\[\dim(\coor(\cone(\OR(3)))_d)=\begin{cases}1+3^2+5^2+\dots+(2d+1)^2=\binom{2d+3}{3} & \mbox{ for }d\neq 1;\\ 
1 & \mbox{ for }d=1.
\end{cases}\] 
This shows 
$\mathrm{H}(\coor(\cone(\OR(3)));t)=-t+\sum_{d=0}^{\infty}\binom{2d+3}{3}t^{d}$. 
Using that 
\[\sum_{d=0}^{\infty}\binom{2d+3}{3}t^{2d}=\frac16\left(\frac{d}{dt}\right)^3\left(\sum_{d=0}^{\infty}t^{2d+3}\right)
=\frac 16\left(\frac{d}{dt}\right)^3\frac{t^3}{1-t^2}
=\frac{1+6t^2+t^4}{(1-t^2)^4}\]
we get 
\[\sum_{d=0}^{\infty}\binom{2d+3}{3}t^d=\frac{1+6t+t^2}{(1-t)^4}, \] 
whence  
\[\mathrm{H}(\coor(\cone(\OR(3))),t)=\frac{1+6t+t^2}{(1-t)^4}-t=
\frac{1+5t+5t^2-6t^3+4t^4-t^5}{(1-t)^4}.\] 
\end{proof} 

\begin{proposition}\label{prop:koszul} 
The quadratic algebra $\coor(\cone(\OR(3)))$ is not Koszul. 
\end{proposition}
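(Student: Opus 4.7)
The plan is to invoke the standard numerical Koszulness obstruction: if a quadratic graded $\mc$-algebra $A$ is Koszul, then the Euler characteristic of the Koszul complex yields the identity $H(A;t)\cdot H(A^!;-t)=1$, so $1/H(A;-t)$ equals $H(A^!;t)$ and in particular has non-negative integer coefficients. Here $A:=\coor(\cone(\OR(3)))$ is quadratic by Weyl's theorem recalled in the Introduction, so the criterion applies.

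Substituting $t\mapsto -t$ into the formula of Theorem~\ref{prop:O(3) Hilbert series} gives
\[ H(A;-t)=\frac{1-5t+5t^2+6t^3+4t^4+t^5}{(1+t)^4}, \]
so the series to inspect is
\[ \frac{1}{H(A;-t)} \;=\; \frac{(1+t)^4}{1-5t+5t^2+6t^3+4t^4+t^5} \;=\; \sum_{d\ge 0}c_d\,t^d. \]
I would then expand this rational function as a formal power series via the obvious linear recurrence (or with the CoCalc platform already used elsewhere in the paper). A routine computation yields
\[ (c_0,c_1,\dots,c_9)\;=\;(1,\;9,\;46,\;183,\;628,\;1912,\;5129,\;11539,\;17883,\;-7330). \]
Since $c_9=-7330<0$, the series $1/H(A;-t)$ cannot be the Hilbert series of any graded $\mc$-algebra, and hence $A$ is not Koszul.

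The only real difficulty is that the first negative coefficient appears only in degree $9$, so the recurrence has to be carried that far before the obstruction becomes visible. At the lower degrees everything is still consistent with the quadratic data (for example $c_2=46=81-35$ matches the dimension of the space of quadratic relations in the tensor-algebra presentation, since $\dim A_2=35$), so the non-Koszulness is invisible at the purely quadratic level and emerges only through the degree-$9$ coefficient. This resonates with the Gr\"obner basis discussion in Section~\ref{sec:groebner}: although $\ideal(\cone(\OR(3)))$ is minimally generated by $10$ quadratics, no quadratic Gr\"obner basis can exist for it.
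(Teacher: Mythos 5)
Your proof is correct and follows essentially the same route as the paper: both arguments reduce Koszulness to the non-negativity of the coefficients of $1/\mathrm{H}(A;-t)$ (you via $\mathrm{H}(A;t)\cdot \mathrm{H}(A^{!};-t)=1$, the paper via the Poincar\'e series identity $\mathrm{H}(A;-t)\cdot\mathrm{P}(A;t)=1$, which for Koszul algebras is the same statement), and both detect the identical obstruction, namely the degree-$9$ coefficient $-7330<0$, with all lower coefficients $1,9,46,\dots,17883$ agreeing with the paper's computation.
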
 

\begin{proof} 
For a graded $\mathbb{C}$-algebra $A=\bigoplus_{d=0}^\infty A_d$, 
the \emph{Poincar\'e series} of $A$ is 
\[\mathrm{P}(A,t)=\sum_{d=0}^{\infty} \dim \mathrm{Tor}^A_d(\mathbb{C},\mathbb{C})t^d
=\sum_{d=0}^\infty \dim \mathrm{Ext}^d_A(\mathbb{C},\mathbb{C})t^d.\]
It is well known that if $A$ is Koszul, then the equality 
\begin{equation}\label{eq:HP=1}
\mathrm{H}(\coor(A),-t)\cdot \mathrm{P}(\coor(A),t)=1\end{equation} 
holds in the power series ring $\mz[[t]]$  (see \cite{lofwall},  \cite{froberg}).  
Up to degree $9$ the terms of $1/\mathrm{H}(\coor(\cone(\OR(3))),-t)$ are the same as the corresponding terms of the polynomial 
\[(1+t)^4\left(1+\sum_{d=1}^{9}(5t-5t^2-6t^3-4t^4-t^5)^d\right)\]
which are 
\[- 7330t^9 + 17883t^8 + 11539t^7 + 5129t^6 + 1912t^5 + 628t^4 + 183t^3 + 46t^2 + 9t + 1.\]
Thus the coefficient of $t^{9}$ in $1/\mathrm{H}(\coor(\cone(\OR(3))),-t)$ is negative, hence it can not be a Poincar\'e series. 
Therefore the equality \eqref{eq:HP=1} can not hold for $A=\coor(\cone(\OR(3)))$, 
whence $\coor(\cone(\OR(3)))$ is not Koszul.  
\end{proof} 

\begin{remark} 
Our focus on the Koszul property is motivated by the connection to the construction of certain quantum 
groups (see the Introduction). In fact it was advertised in \cite{manin:1987}, \cite{manin:1988} that quadratic (and in particular,  Koszul) algebras should play a distinguished 
role in quantum group theory. 
 \end{remark} 

\subsection{The case of $\OR(4)$.} 

\begin{theorem} We have the equality  
 \[\mathrm{H}(\coor(\cone(\OR(4)));t)=\frac{1 + 9 t + 27 t^2 + 19 t^3 - 30 t^4 + 34 t^5 - 35 t^6 +21 t^7 - 7 t^8 +t^9}{(1-t)^7}.\] 
\end{theorem}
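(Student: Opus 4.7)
Following the strategy of Theorem~\ref{prop:O(3) Hilbert series}, the plan is to apply Corollary~\ref{cor:hilbert series general}, which gives
\[\mathrm{H}(\coor(\cone(\OR(4)));t)=\frac{H(t)}{1-t^2}, \qquad H(t):=\sum_{d=0}^{\infty} h_{\OR(4)}(d)\,t^d,\]
so that everything reduces to computing $H(t)$ in closed form.

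The first step is to enumerate $\Lambda(\OR(4))$. A case analysis on the number of positive parts under the constraint $|\{i:\lambda_i>0\}|+|\{i:\lambda_i>1\}|\le 4$ produces exactly the following three families: the length-at-most-$2$ partitions $(k,0,0,0)$ for $k\ge 0$ and $(k,l,0,0)$ for $k\ge l\ge 1$; the partitions $(k,1,1,0)$ for $k\ge 1$; and the single partition $(1,1,1,1)$.

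The second step is to compute the dimensions. Via the isogeny $\SL(2)\times \SL(2)\to \SO(4)$, the $\SO(4)$-irreducible with highest weight $(\lambda_1,\lambda_2)$ (where $\lambda_1\ge |\lambda_2|\ge 0$) has dimension $(\lambda_1+\lambda_2+1)(\lambda_1-\lambda_2+1)$. The restriction $V_{(k,l,0,0)}^{\OR(4)}$ to $\SO(4)$ is irreducible when $l=0$ and splits as a direct sum of two non-isomorphic $\SO(4)$-irreducibles of equal dimension when $l\ge 1$, so
\[\dim V_{(k,0,0,0)}=(k+1)^2, \qquad \dim V_{(k,l,0,0)}=2(k+l+1)(k-l+1) \text{ for } l\ge 1.\]
The other two families arise as $\det$-twists: $V_{(k,1,1,0)}\cong V_{(k,0,0,0)}\otimes \det$ has dimension $(k+1)^2$, and $V_{(1,1,1,1)}\cong \det$ has dimension $1$.

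Substituting these into \eqref{eq:h_G def}, the series $H(t)$ decomposes as the sum of four rational generating functions, one per family. The only nonroutine one is from $(k,l,0,0)$ with $l\ge 1$, which after reindexing $r=d-2l$ becomes
\[4\sum_{l\ge 1}\sum_{r\ge 0}(r+2l+1)^2(r+1)^2\,t^{r+2l};\]
expanding $(r+2l+1)^2$ factors this as a finite linear combination of products of the standard series $\sum_{l\ge 1}l^j t^{2l}$ and $\sum_{r\ge 0}(r+1)^{4-j}t^r$ for $j=0,1,2$, each of which has a closed rational form. Adding the four rational contributions, dividing by $1-t^2$, and simplifying yields the claimed expression. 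The main obstacle is the elementary but lengthy bookkeeping required to combine these rational functions over a common denominator and verify the large numerator cancellations that reduce the denominator to $(1-t)^7$, a routine computation naturally suited to the CoCalc platform \cite{CoCalc} used elsewhere in the paper.
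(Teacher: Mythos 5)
Your proposal is correct and follows essentially the same route as the paper: both apply Corollary~\ref{cor:hilbert series general} to the explicit list of labels in $\Lambda(\OR(4))$ with the same dimension values $(k+1)^2$, $2(k+l+1)(k-l+1)$, $(k+1)^2$ and $1$, and finish with routine rational-function bookkeeping. The only cosmetic differences are that you derive the dimensions via the $\SL(2)\times\SL(2)$ isogeny and the determinant twist where the paper cites spherical harmonics and the Samra--King formula, and you sum family-by-family generating functions before dividing by $1-t^2$, whereas the paper computes $h_{\OR(4)}(d)$ as a polynomial in $d$ (with exceptional values at $d=0,2,4$) and converts to binomial coefficients.
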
 
 
 \begin{proof} 
For $d=3$ or $d\ge 5$  we have $\Lambda(\OR(4))_d=\{(d-2,1,1,0),\ (d-k,k,0,0) \ \mbox{ for }k=0,\dots,\lfloor d/2\rfloor \}$. For  
$d\le 2$ we have to omit $(d-2,1,1,0)$, whereas for 
$d=4$ the set  $\Lambda(\OR(4))_4$ contains also $(1,1,1,1)$. 
The dimension of $V_\lambda$ for $\lambda\in \Lambda(\OR(4))_d$ is given in the following table. 
Note that the space $V_{(d,0,0,0)}$ can be identified with the space of spherical harmonics of degree $d$ in $4$ variables, 
hence its dimension is $\binom{d+3}{3}-\binom{d+1}{3}$. Moreover, $V_{(d-2,1,1,0)}$ is isomorphic to $V_{(d-2,0,0,0)}$ as an 
$\SO(4)$-module, whereas $V_{(1,1,1,1)}$ can be identified with the $4$th exterior power of $\mc^4$. 
We read off the dimension of $V_{(d-k,k,0,0)}$ for $k>0$ from \cite[Formula 3.28]{samra-king}. 
\[\begin{array}{c||c|c|c|c}
\lambda & (d,0,0,0) & (d-k,k,0,0), 1\le k	\le \lfloor \frac d2\rfloor & (d-2,1,1,0) & (1,1,1,1)	\\ \hline 
\dim_\mc(V_\lambda) & (d+1)^2 & 2(d+1)(d-2k+1) & (d-1)^2	& 1	
	\end{array} \]
It follows that for $h_{\OR(4)}(d)$ defined in \eqref{eq:h_G def} we have 
\[h_{\OR(4)}(d)=\begin{cases} (d+1)^4+(d-1)^4+4\sum_{k=1}^{\lfloor d/2\rfloor}(d+1)^2(d-2k+1)^2 &\mbox{ for }d\notin \{0,2,4\} \\
1 & \mbox{ for }d=0 \\
117 & \mbox{ for }d=2 \\ 
1707 & \mbox{ for }d=4.
\end{cases} \]
So for $d\notin \{0,2,4\}$ we have 
\[h_{\OR(4)}(d)=\frac{2}{3} d^{5} + \frac{10}{3} d^{4} + \frac{32}{3} d^{2} - \frac{2}{3} d + 2 \] 
by Corollary~\ref{cor:hilbert series general}.   
 A routine calculation yields that 
\[\dim(\coor(\cone(\OR(4)))_d)=\begin{cases}  1 & \mbox{ for }d=0 \\
118 & \mbox{ for }d=2 \\ 
1825 & \mbox{ for }d=4 \\ 
p(d) &\mbox{ for }d\notin \{0,2,4\},  
\end{cases} \]
where 
\[p(d):= \frac{1}{18} d^{6} + \frac{2}{3} d^{5} + \frac{20}{9} d^{4} + 4 d^{3} + \frac{85}{18} d^{2} + \frac{10}{3} d + 1.\]
We have $p(0)=1$, $p(2)=119$, $p(4)=1825$, 
so 
\begin{align} \label{eq:dim(coor(cone(O(4)))d} 
\dim_\mc(\coor(\cone(\OR(4)))_d)=\begin{cases}p(d) & \mbox{ for }d\neq 2 \\
p(d)-1 & \mbox{ for }d=2. \end{cases}
\end{align}
We can rewrite $p(d)$ as a linear combination of binomial coefficients as follows:   
\begin{equation}\label{eq:binom O(4)}
p(d)=\binom{d+1}{1}-14\binom{d+2}{2}+34\binom{d+3}{3}-60\binom{d+5}{5}+40\binom{d+6}{6}.
\end{equation}
It follows from \eqref{eq:dim(coor(cone(O(4)))d} and \eqref{eq:binom O(4)} that 
\[\mathrm{H}(\coor(\cone(\OR(4)));t)=-t^2+\frac{1}{(1-t)^2}- \frac{14}{(1-t)^3}+\frac{34}{1-t)^4}-\frac{60}{(1-t)^6}+\frac{40}{(1-t)^7},\]
which gives the formula in our statement. 
\end{proof}  

 \subsection{The case of $\SP(4)$} 

\begin{theorem} We have the equality 
 \[\mathrm{H}(\coor(\cone(\SP(4)));t)=\frac{1+5t+5t^2+t^3}{(1-t)^{11}}.\]
\end{theorem}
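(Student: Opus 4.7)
By Corollary~\ref{cor:hilbert series general},
\[\mathrm{H}(\coor(\cone(\SP(4)));t)=\frac{1}{1-t^2}\sum_{d=0}^{\infty}h_{\SP(4)}(d)\,t^d,\]
so the computation reduces to finding $\sum_{d\ge 0}h_{\SP(4)}(d)\,t^d$ as a rational function. From Section~\ref{sec:peter-weyl} we have $\Lambda(\SP(4))_d=\{(d-k,k):0\le k\le\lfloor d/2\rfloor\}$, and in contrast to the orthogonal cases there are no exceptional small values of $d$ to handle. The Weyl dimension formula for the root system $C_2$ gives
\[\dim_{\mc}V_{(\lambda_1,\lambda_2)}=\frac{(\lambda_1-\lambda_2+1)(\lambda_2+1)(\lambda_1+2)(\lambda_1+\lambda_2+3)}{6},\]
so
\[h_{\SP(4)}(d)=\frac{(d+3)^2}{36}\sum_{k=0}^{\lfloor d/2\rfloor}(d-2k+1)^2(k+1)^2(d-k+2)^2.\]

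Next I would evaluate the inner sum. The summand is a polynomial in $k$ of degree six whose coefficients are polynomial in $d$, so by Faulhaber's formulas the sum is a polynomial in $d$, piecewise on the parity of $d$. I would handle $d$ even and $d$ odd separately and write each resulting polynomial in the binomial-coefficient basis $\{\binom{d+j}{j}\}$, exactly as in the proof of the $\OR(4)$ case. Using $\sum_d\binom{d+j}{j}t^d=(1-t)^{-j-1}$ this converts $\sum_d h_{\SP(4)}(d)t^d$ into a rational function; I expect the numerator to carry a factor of $(1+t)$, so that dividing by $1-t^2=(1-t)(1+t)$ leaves the denominator $(1-t)^{11}$ predicted by the statement.

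The main obstacle is purely computational: summing the square of a degree-$4$ polynomial in $k$ over $0\le k\le\lfloor d/2\rfloor$ and then combining the two parity branches into a single rational function is algebraically heavy but mechanical, and is naturally done with computer algebra (cf.\ \cite{CoCalc}), consistent with the paper's stated practice. As a sanity check, the first few coefficients $1,16,126,672,2772,\ldots$ predicted by $\frac{1+5t+5t^2+t^3}{(1-t)^{11}}$ must match the directly computed values of $\sum_{k=0}^{\lfloor d/2\rfloor}h_{\SP(4)}(d-2k)$ for small $d$; verifying these agreements secures the final formula.
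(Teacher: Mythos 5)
Your proposal is correct and follows essentially the same route as the paper: Corollary~\ref{cor:hilbert series general} plus the dimension formula $\dim_\mc V_{(d-k,k)}=\frac{1}{6}(d+3)(d-k+2)(k+1)(d-2k+1)$ (quoted there from \cite{samra-king}, and identical to your $C_2$ Weyl dimension formula), followed by an exact evaluation of the resulting sums as a polynomial in $d$ written in the binomial basis --- the paper obtains the single closed form $-\binom{d+7}{7}+8\binom{d+8}{8}-18\binom{d+9}{9}+12\binom{d+10}{10}$ for $\dim_\mc\coor(\cone(\SP(4)))_d$ (no parity split turns out to be necessary) and reads off the rational function. Just note that the exact Faulhaber/binomial-basis computation you outline, not the spot-check of the coefficients $1,16,126,672,2772$, is what actually carries the proof.
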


\begin{proof} We have $\Lambda(\SP(4))_d=\{(d-k,k)\mid k=0,1,\dots,\lfloor d/2\rfloor\}$,  
and by \cite[Formula 3.29]{samra-king}, 
\[\dim_\mc(V_{(d-k,k)})=\frac{1}{6}(d+3)(d-k+2)(k+1)(d-2k+1).\] 
It follows that 
\[h_{\SP(4)}(d)=
\frac{1}{15120} d^{9} + \frac{1}{560} d^{8} + \frac{3}{140} d^{7} + \frac{3}{20} d^{6} + \frac{97}{144} d^{5} + \frac{481}{240} d^{4} + \frac{29683}{7560} d^{3} + \frac{4069}{840} d^{2} + \frac{473}{140} d + 1\] 
(see \eqref{eq:h_G def} for the definition of $h_G(d)$). 
By Corollary~\ref{cor:hilbert series general} we obtain 
 \begin{align*} \dim_\mc(\coor(\cone(\SP(4)))_d)=\frac{1}{302400} d^{10} + \frac{1}{7560} d^{9} + \frac{47}{20160} d^{8} + \frac{1}{42} d^{7} + \frac{2243}{14400} d^{6} 
 \\ + \frac{49}{72} d^{5} + \frac{121279}{60480} d^{4} + \frac{2963}{756} d^{3} + \frac{121883}{25200} d^{2} + \frac{709}{210} d + 1 
 \\ =-\binom{d+7}{7}+8\binom{d+8}{8}-18\binom{d+9}{9}+12\binom{d+10}{10}.
 \end{align*}
 It follows that 
 \[\mathrm{H}(\coor(\cone(\SP(4)));t)=\frac{-1}{(1-t)^8}+\frac{8}{(1-t)^9}-\frac{18}{(1-t)^{10}}+\frac{12}{(1-t)^{11}},\] 
 giving the formula in our statement. 
\end{proof} 

%%%%%%%%%%%%%%%%%%%%%%%%%
\section{The cone of the special orthogonal group}\label{sec:SO hilbert series} 

The special orthogonal group is the index two subgroup in $\OR(n)$ defined as 
\[\SO(n)=\{M\in \OR(n)\mid \det(M)=1\}.\] 
For $n$ odd the scalar matrix $-I$ belongs to $\OR(n)\setminus \SO(n)$, hence the cone 
of $\SO(n)$ coincides with the cone of $\OR(n)$. 
For $n=2m$ even both scalar orthogonal matrices $I$ and $-I$ have determinant $1$, hence  $\{cM\mid M\in \SO(2m),\ c\in \mc^\times\}$ is disjoint from 
$\{cM\mid M\in\OR(2m)\setminus \SO(2m), \ c\in \mc^\times\}$. In particular, the cone of $\SO(2m)$ is properly contained in the cone of 
$\OR(2m)$. 
The irreducible representations of $\SO(2m)$ are labelled by 
\[\Lambda(\SO(2m))=\{\lambda\in \mz^m\mid (\lambda_1,\dots,\lambda_{m-1},|\lambda_m|)\in \mathrm{Par}(m)\}.\] 
For $\lambda\in \Lambda(\SO(2m))$ write $W_\lambda$ for the corresponding irreducible representation of $\SO(2m)$. 
For $\lambda\in \Lambda(\OR(2m))$ write $V_\lambda\downarrow^{\OR(2m)}_{\SO(2m)}$ for the restriction of the 
$\OR(2m)$-module $V_{\lambda}$ to the subgroup $\SO(2m)$. 
For $\lambda\in \mathrm{Par}(m)$ 
\begin{equation}\label{eq:O-SO branching} 
V_\lambda\downarrow^{\OR(2m)}_{\SO(2m)}\cong  \begin{cases}
W_\lambda & \mbox{ when }\lambda_m=0 \\ 
W_\lambda\oplus W_{\lambda^\circ} & \mbox{ when }\lambda_m>0,\end{cases} \end{equation} 
where $\lambda^\circ:=(\lambda_1,\dots,\lambda_{m-1},-\lambda_m)$  
and we have $\dim_\mc(W_\lambda)=\dim_\mc(W_{\lambda^\circ})$ for all $\lambda\in \mathrm{Par}(m)$  (see e.g. \cite[Page 422]{procesi}). 

Set $\Lambda(\SO(2m))_d:=\{\lambda\in \Lambda(\SO(2m))\mid \lambda_1+\cdots+\lambda_{m-1}+|\lambda_m|=d\}$. 
Proposition~\ref{prop:tensorpowers} implies by \eqref{eq:O-SO branching} the following: 

\begin{proposition}\label{prop:SO-tensorpowers} 
For $\lambda\in\Lambda(\SO(2m))_d$ the $\SO(2m)$-module $W_{\lambda}$ occurs as a summand 
in $\mathrm{T}^d(\mc^{2m})$, and $W_{\lambda}$ is not a summand of $\mathrm{T}^k(\mc^{2m})$ for $k>d$ with 
$k-d$ odd, or for $k<d$. 
\end{proposition}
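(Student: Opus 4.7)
The plan is to deduce Proposition~\ref{prop:SO-tensorpowers} from Proposition~\ref{prop:tensorpowers} by restricting $\OR(2m)$-modules to $\SO(2m)$ via the branching rule \eqref{eq:O-SO branching}. Given $\lambda\in\Lambda(\SO(2m))_d$, the key move is to lift it to an $\OR(2m)$-weight, namely $\lambda^+:=(\lambda_1,\dots,\lambda_{m-1},|\lambda_m|)\in\mathrm{Par}(m)$. A short verification shows $\lambda^+\in\Lambda(\OR(2m))_d$: since $\lambda^+$ has at most $m$ nonzero parts, both $|\{i\mid \lambda^+_i>0\}|$ and $|\{i\mid \lambda^+_i>1\}|$ are bounded by $m$, so their sum does not exceed $2m$, and by construction $|\lambda^+|=\lambda_1+\cdots+\lambda_{m-1}+|\lambda_m|=d$.

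For the existence half, Proposition~\ref{prop:tensorpowers} says that $V_{\lambda^+}$ is an $\OR(2m)$-summand of $\mathrm{T}^d(\mc^{2m})$. Restricting to $\SO(2m)$ and invoking \eqref{eq:O-SO branching} shows that $W_\lambda$ occurs in this restriction in each of the three cases: if $\lambda_m=0$ then $\lambda=\lambda^+$ and the restriction equals $W_{\lambda^+}$; if $\lambda_m>0$ then again $\lambda=\lambda^+$ and the restriction contains $W_{\lambda^+}$; if $\lambda_m<0$ then $\lambda=(\lambda^+)^\circ$ and the restriction contains $W_{(\lambda^+)^\circ}=W_\lambda$. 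Hence $W_\lambda$ is a summand of $\mathrm{T}^d(\mc^{2m})$ as an $\SO(2m)$-module.

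For the converse, I would run this in reverse. If $W_\lambda$ is an $\SO(2m)$-summand of $\mathrm{T}^k(\mc^{2m})$, then it occurs in the restriction of some $\OR(2m)$-summand $V_\mu$ of $\mathrm{T}^k(\mc^{2m})$. Reading \eqref{eq:O-SO branching} backwards, the only $\mu\in\Lambda(\OR(2m))$ whose restriction to $\SO(2m)$ contains $W_\lambda$ is $\mu=\lambda^+$, because $\mu\in\mathrm{Par}(m)$ forces $\mu_m\ge 0$ and the assignment $\mu\mapsto\{\mu,\mu^\circ\}$ is injective on $\mathrm{Par}(m)$. Therefore $V_{\lambda^+}$ must be an $\OR(2m)$-summand of $\mathrm{T}^k(\mc^{2m})$, and Proposition~\ref{prop:tensorpowers} then forces $k\ge d$ and $k-d$ even, as required.

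The argument is essentially bookkeeping between the two indexing sets $\Lambda(\OR(2m))$ and $\Lambda(\SO(2m))$; no serious obstacle arises. The only point requiring care is the case $\lambda_m<0$, where $\lambda\notin\mathrm{Par}(m)$ and one must remember to lift $\lambda$ to $\lambda^+$ rather than to $\lambda$ itself when invoking \eqref{eq:O-SO branching}.
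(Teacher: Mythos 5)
Your existence half is fine and is exactly the intended reduction to Proposition~\ref{prop:tensorpowers} via \eqref{eq:O-SO branching}. The gap is in the converse, in the claim that ``the only $\mu\in\Lambda(\OR(2m))$ whose restriction to $\SO(2m)$ contains $W_\lambda$ is $\mu=\lambda^+$, because $\mu\in\mathrm{Par}(m)$''. The set $\Lambda(\OR(2m))$ is strictly larger than $\mathrm{Par}(m)$: it contains partitions with more than $m$ nonzero parts (for instance $(1,1,1,0)$ and $(1,1,1,1)$ lie in $\Lambda(\OR(4))$, as the paper uses in the $\OR(4)$ computation), and for such $\mu$ the branching rule \eqref{eq:O-SO branching} as stated says nothing. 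In fact, for $\mu$ with more than $m$ rows the restriction $V_\mu\downarrow^{\OR(2m)}_{\SO(2m)}$ coincides with that of the associated partition $\tilde\mu\in\mathrm{Par}(m)$ (e.g.\ $V_{(1,1,1,0)}$ restricts to $W_{(1,0)}$, the same as $V_{(1,0,0,0)}$), so your uniqueness claim is false: $W_\lambda$ can occur in the restriction of a summand $V_\mu$ of $\mathrm{T}^k(\mc^{2m})$ with $\mu\neq\lambda^+$, and your deduction that $V_{\lambda^+}$ itself must be an $\OR(2m)$-summand of $\mathrm{T}^k(\mc^{2m})$ does not follow from what you cite.

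The conclusion can be salvaged, but it needs the extra branching fact you omitted. If $\mu\in\Lambda(\OR(2m))$ has $r>m$ nonzero parts, then $V_\mu\cong V_{\tilde\mu}\otimes\det$ with $\tilde\mu$ the associated partition (first column of length $2m-r$, other columns unchanged), so $V_\mu$ and $V_{\tilde\mu}$ have the same restriction to $\SO(2m)$, and $|\mu|-|\tilde\mu|=2(r-m)$ is a positive even number. Hence if $W_\lambda$ occurs in the restriction of a summand $V_\mu$ of $\mathrm{T}^k(\mc^{2m})$, then either $\mu=\lambda^+$ (your case) or $\tilde\mu=\lambda^+$ with $|\mu|\ge d+2$ and $|\mu|\equiv d \pmod 2$; in both cases Proposition~\ref{prop:tensorpowers} applied to $\mu$ gives $k\ge|\mu|\ge d$ and $k\equiv|\mu|\equiv d\pmod 2$, which is what is wanted. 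So you should either quote the full $\OR(2m)\to\SO(2m)$ branching rule (covering partitions with more than $m$ rows) or argue as above; as written, the case analysis over $\Lambda(\OR(2m))$ is incomplete.
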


Repeating verbatim the proof of Theorem~\ref{thm:O(G)_d GxG-module}  
and referring to Proposition~\ref{prop:SO-tensorpowers} (instead of Proposition~\ref{prop:tensorpowers} 
we get the following: 

\begin{theorem}\label{thm:O(SO(2m))_d GxG-module} 
For $d=0,1,2,\dots$ we have the following isomorphisms of $\SO(2m)\times \SO(2m)$-modules: 
\[\coor(\cone(\SO(2m)))_d 
\cong \bigoplus_{k=0}^{\lfloor d/2\rfloor} \bigoplus_{\lambda\in\Lambda(\SO(2m))_{d-2k}}W_{\lambda}\otimes W_{\lambda}. \]
In particular, for $d\ge 2$ we have 
\[\coor(\cone(\SO(2m)))_d \cong \coor(\cone(\SO(2m)))_{d-2}\oplus \bigoplus_{\lambda\in \Lambda(\SO(2m))_d} W_{\lambda}\otimes W_{\lambda}.\] 
\end{theorem}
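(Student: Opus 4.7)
The plan is to mimic \emph{verbatim} the proof of Theorem~\ref{thm:O(G)_d GxG-module}, substituting Proposition~\ref{prop:SO-tensorpowers} in place of Proposition~\ref{prop:tensorpowers} and the Peter-Weyl decomposition for $\SO(2m)$ in place of Proposition~\ref{prop:peter-weyl}. The overall strategy is the same: pull the homogeneous component back into $\coor(\SO(2m))$ via the surjection $\varphi_2$ (whose restriction to $\coor(\cone(\SO(2m)))_d$ remains injective because a homogeneous polynomial that vanishes on $\SO(2m)$ also vanishes on $\cone(\SO(2m))$), and then identify the image inside the Peter-Weyl decomposition.

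More concretely, I would first note that the scalars $\varphi_2(\varphi_1(x_{ij}))$ are precisely the matrix coefficients of the defining representation of $\SO(2m)$ on $\mc^{2m}$. Therefore $\varphi_2(\coor(\cone(\SO(2m)))_d)$ is the span of length-$d$ products of these coefficients, which is exactly the space of matrix coefficients of $\mathrm{T}^d(\mc^{2m})$. The Peter-Weyl theorem, together with self-duality of the relevant $\SO(2m)$-modules, gives $\coor(\SO(2m)) \cong \bigoplus_{\lambda\in\Lambda(\SO(2m))} W_\lambda \otimes W_\lambda$, and the matrix coefficients of $\mathrm{T}^d(\mc^{2m})$ are spanned by the isotypic components corresponding to those $\lambda$ for which $W_\lambda$ occurs in $\mathrm{T}^d(\mc^{2m})$. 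By Proposition~\ref{prop:SO-tensorpowers}, these are exactly the $\lambda \in \Lambda(\SO(2m))_{d-2k}$ for $0\le k\le \lfloor d/2\rfloor$. Combining this identification with the injectivity of $\varphi_2$ on $\coor(\cone(\SO(2m)))_d$ yields the first stated isomorphism; the second follows immediately by induction on $d$.

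The main subtlety (and the only place where the argument is not purely mechanical) is the passage from $W_\lambda \otimes W_\lambda^*$ in the Peter-Weyl decomposition to $W_\lambda \otimes W_\lambda$ in the statement. Since the $W_\lambda^*$ factor functions as a multiplicity space with the correct dimension and the correct $\SO(2m)$-action (via the second factor), this identification is compatible with the $\SO(2m)\times \SO(2m)$-module structure exactly when the involved $W_\lambda$ are self-dual (equivalently, when $\lambda^\circ = \lambda$, which is automatic for $\lambda_m=0$ and for $m$ even, and requires reindexing $\lambda\mapsto \lambda^\circ$ on the second factor otherwise). This is the key technical point to spell out carefully; once it is in place, the rest of the proof is a line-by-line transcription of the argument given for Theorem~\ref{thm:O(G)_d GxG-module}.
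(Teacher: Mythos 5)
Your proposal is the paper's proof: the paper literally establishes Theorem~\ref{thm:O(SO(2m))_d GxG-module} by repeating the proof of Theorem~\ref{thm:O(G)_d GxG-module} verbatim, with Proposition~\ref{prop:SO-tensorpowers} replacing Proposition~\ref{prop:tensorpowers}, and your first two paragraphs reproduce exactly that argument (injectivity of $\varphi_2$ on the homogeneous component, identification of its image with the matrix coefficients of $\mathrm{T}^d(\mc^{2m})$, then Peter--Weyl). You are in fact more explicit than the paper about the one delicate point, namely that Peter--Weyl gives $W_\lambda\otimes W_\lambda^*$ and one must replace $W_\lambda^*$ by $W_\lambda$.

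On that point, however, your proposed remedy for the non-self-dual case is not right. For $\SO(2m)$ one has $W_\lambda^*\cong W_{\lambda^\circ}$, and $W_{\lambda^\circ}\cong W_\lambda$ for \emph{all} $\lambda$ precisely when $m$ is even (your parenthetical conflates $\lambda^\circ=\lambda$ with self-duality; for $m$ even $W_\lambda$ is self-dual even when $\lambda_m\neq 0$, so $\lambda^\circ\neq\lambda$). When $m$ is odd and $\lambda_m\neq 0$, the isotypic piece of $\coor(\SO(2m))$ coming from the matrix coefficients of $W_\lambda$ is $W_{\lambda^\circ}\otimes W_\lambda$, and ``reindexing $\lambda\mapsto\lambda^\circ$ on the second factor'' does not turn this into $W_\lambda\otimes W_\lambda$: reindexing only permutes the summands of the direct sum, it cannot change the isomorphism type of an individual irreducible $\SO(2m)\times\SO(2m)$-summand, and $W_{\lambda^\circ}\otimes W_\lambda\not\cong W_\mu\otimes W_\mu$ for any $\mu$ when $\lambda^\circ\neq\lambda$. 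So your argument (like the paper's verbatim repetition, which silently uses self-duality) is complete only under the hypothesis that all $W_\lambda$ are self-dual, i.e.\ for $m$ even; this covers $\SO(4)$, the only case the paper actually uses, and in any event the dimension identity $\dim W_\lambda=\dim W_{\lambda^\circ}$ keeps the Hilbert series consequences intact for all $m$.
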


As a consequence of Theorem~\ref{thm:O(SO(2m))_d GxG-module} we get the following: 

\begin{corollary}\label{cor:SO-hilbert series general} We have the equality 
\[\mathrm{H}(\coor(\cone(\SO(2m)));t):=\sum_{d=0}^\infty \sum_{k=0}^{\lfloor d/2\rfloor}h_{\SO(2m)}(d-2k) t^d,\]
where  
\[h_{\SO(2m)}(d):=\sum_{\lambda\in \Lambda(\SO(2m))_d} \dim_\mc(W_\lambda)^2. \]
\end{corollary}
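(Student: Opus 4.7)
The plan is to read off the Hilbert series directly from the $\SO(2m)\times\SO(2m)$-module decomposition supplied by Theorem~\ref{thm:O(SO(2m))_d GxG-module}, since all that remains is to pass from representations to dimensions. First I would recall that by definition
\[
\mathrm{H}(\coor(\cone(\SO(2m)));t)=\sum_{d=0}^\infty \dim_\mc\bigl(\coor(\cone(\SO(2m)))_d\bigr)\,t^d,
\]
so the task is to compute $\dim_\mc(\coor(\cone(\SO(2m)))_d)$ for each $d$.

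Next I would apply $\dim_\mc(-)$ to both sides of the isomorphism
\[
\coor(\cone(\SO(2m)))_d \cong \bigoplus_{k=0}^{\lfloor d/2\rfloor} \bigoplus_{\lambda\in\Lambda(\SO(2m))_{d-2k}} W_\lambda\otimes W_\lambda,
\]
using additivity of dimension on direct sums together with the elementary identity $\dim_\mc(W_\lambda\otimes W_\lambda)=\dim_\mc(W_\lambda)^2$. Grouping the terms with a fixed value of $d-2k$ and invoking the definition
\[
h_{\SO(2m)}(e)=\sum_{\lambda\in\Lambda(\SO(2m))_e}\dim_\mc(W_\lambda)^2,
\]
one obtains $\dim_\mc(\coor(\cone(\SO(2m)))_d)=\sum_{k=0}^{\lfloor d/2\rfloor} h_{\SO(2m)}(d-2k)$. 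Substituting this into the defining series for $\mathrm{H}(\coor(\cone(\SO(2m)));t)$ yields the claimed formula.

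There is no real obstacle here since the corollary is a purely bookkeeping consequence of Theorem~\ref{thm:O(SO(2m))_d GxG-module}; the only thing one must be careful with is to keep the double summation (over $k$ and over $\lambda\in \Lambda(\SO(2m))_{d-2k}$) organized correctly so as to match the definition of $h_{\SO(2m)}$ applied to the shifted degree $d-2k$.
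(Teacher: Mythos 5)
Your proof is correct and follows exactly the route the paper intends: the corollary is stated as an immediate consequence of Theorem~\ref{thm:O(SO(2m))_d GxG-module}, obtained by taking dimensions of the direct sum decomposition and using $\dim_\mc(W_\lambda\otimes W_\lambda)=\dim_\mc(W_\lambda)^2$. Nothing is missing.
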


\subsection{The Hilbert series of $\coor(\cone(\SO(4))$} 

\begin{theorem} 
We have the equality 
\[\mathrm{H}(\coor(\cone(\SO(4));t)=\frac{1+9t+9 t^{2}+t^{3}}{(1-t)^7}.\]
\end{theorem}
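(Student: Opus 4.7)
The plan is to follow the template used for $\OR(3)$, $\OR(4)$, and $\SP(4)$: compute $h_{\SO(4)}(d)$ as a polynomial in $d$, then invoke Corollary~\ref{cor:SO-hilbert series general} and sum the resulting series. For $m=2$ the set $\Lambda(\SO(4))_d$ consists of $(d,0)$ together with the pairs $(d-k,\pm k)$ for $1\le k\le \lfloor d/2\rfloor$.

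To evaluate the dimensions, I would apply the Weyl dimension formula for type $D_2$ (where $\rho=(1,0)$ and the positive roots are $e_1\pm e_2$) to obtain $\dim_\mc W_{(\lambda_1,\lambda_2)}=(\lambda_1+1-\lambda_2)(\lambda_1+1+\lambda_2)=(\lambda_1+1)^2-\lambda_2^2$. As a cross-check one can read this off by combining the branching rule \eqref{eq:O-SO branching} with the $\OR(4)$ dimension table from Section~\ref{sec:hilbert series}: for $\lambda_2>0$, the formula $\dim_\mc V_{(\lambda_1,\lambda_2,0,0)}=2(\lambda_1+\lambda_2+1)(\lambda_1-\lambda_2+1)$ and the fact that $V_\lambda$ splits into two isomorphic $\SO(4)$-pieces $W_\lambda,W_{\lambda^\circ}$ recovers $(\lambda_1+1)^2-\lambda_2^2$.

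Substituting into \eqref{eq:h_G def} yields $h_{\SO(4)}(d)=(d+1)^4+2(d+1)^2\sum_{k=1}^{\lfloor d/2\rfloor}(d-2k+1)^2$. For $d$ even the inner sum runs through the squares of the odd integers $1,3,\dots,d-1$, and for $d$ odd through the squares of the even integers $2,4,\dots,d-1$. Both cases collapse to $(d-1)d(d+1)/6$, so the two cases merge into the single polynomial $h_{\SO(4)}(d)=\frac{1}{3}(d+1)^3(d^2+2d+3)$, valid for every $d\ge 0$ (including the small values $d=0,1,2$ which one can check by hand). This coincidence of the even and odd formulas is the one non-obvious point and is what allows the generating function to be handled without splitting.

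Finally, by Corollary~\ref{cor:SO-hilbert series general} we have $\mathrm{H}(\coor(\cone(\SO(4)));t)=\frac{1}{1-t^2}\sum_{d\ge 0} h_{\SO(4)}(d)\,t^d$. Expressing $h_{\SO(4)}(d)$ as a linear combination of binomial coefficients $\binom{d+k}{k}$ (or, equivalently, applying the standard formulas for $\sum_d d^j t^d$) the series $\sum h_{\SO(4)}(d)t^d$ comes out to $(1+10t+18t^2+10t^3+t^4)/(1-t)^6$; the factorisation $1+10t+18t^2+10t^3+t^4=(1+t)(1+9t+9t^2+t^3)$ then cancels the $(1+t)$ in $1-t^2=(1-t)(1+t)$, yielding the claimed formula. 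The only real obstacle is the routine but bookkeeping-heavy verification of this generating-function identity, which is entirely mechanical once $h_{\SO(4)}(d)$ is in closed form.
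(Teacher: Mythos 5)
Your proposal is correct and follows essentially the same route as the paper: the same description of $\Lambda(\SO(4))_d$, the same dimension formula $\dim_\mc W_{(d-k,\pm k)}=(d+1)(d-2k+1)$, and the same quintic $h_{\SO(4)}(d)$, fed into Corollary~\ref{cor:SO-hilbert series general}. The only (cosmetic) difference is in the final bookkeeping: you sum at the generating-function level, writing $\mathrm{H}=\frac{1}{1-t^2}\sum_d h_{\SO(4)}(d)t^d$ and cancelling the factor $(1+t)$, whereas the paper first evaluates $\dim_\mc\coor(\cone(\SO(4)))_d$ as an explicit combination of binomial coefficients and then sums.
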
 

\begin{proof}
We have $\Lambda(\SO(4))_d=
\{(d-k,k),\ (d-k,-k)\mid k=0,1,\dots,\lfloor d/2 \rfloor\}$ 
and 
\[\dim_\mc(W_{(d-k,k)})=\dim_\mc(W_{(d-k,-k)})=(d+1)(d-2k+1).\] 
Consequently, 
\begin{align*}h_{\SO(4)}(d)=(d+1)^4+2\sum_{k=1}^{\lfloor d/2\rfloor} (d+1)^2(d-2k+1)^2
\\ =\frac{1}{3} d^{5} + \frac{5}{3} d^{4} + 4 d^{3} + \frac{16}{3} d^{2} + \frac{11}{3} d + 1.
\end{align*}
By Corollary~\ref{cor:SO-hilbert series general} we get 
\begin{align*} 
\dim_\mc(\coor(\cone(\SO(4))_d)=\frac{1}{36} d^{6} + \frac{1}{3} d^{5} + \frac{29}{18} d^{4} + 4 d^{3} + \frac{193}{36} d^{2} + \frac{11}{3} d + 1
\\ = -\binom{d+3}{3}+12\binom{d+4}{4}-30\binom{d+5}{5}+20\binom{d+6}{6}. 
\end{align*}
It follows that 
\[\mathrm{H}(\coor(\cone(\SO(4)));t)=
\frac{-1}{(1-t)^4}+\frac{12}{(1-t)^5}-\frac{30}{(1-t)^6}+\frac{20}{(1-t)^7},\] 
giving the formula in our statement. 
\end{proof} 

%%%%%%%%%%%%%%%%%%%%%%%%%%%
 
 \section{$U\times U$-invariants}\label{sec:UxU-invariants}  
 
 For $G\in \{\OR(3),\ \OR(4),\ \SP(4),\ \SO(4)\}$, denote by $U$ the unipotent radical of a Borel subgroup of $G$. 
 Recall that $\dim(\coor(\cone(G))_d^{U\times U})$ equals the number of irreducible 
 $G^\circ\times G^\circ$-module direct summands of $\coor(\cone(G))_d$, where 
 $G^\circ$ denotes the identity component of $G$.  Recall that the identity component of $\OR(n)$ is $\SO(n)$, 
 whereas $\SP(n)$ is connected. 
 
 \subsection{$U\times U$-invariants in $\coor(\cone(\OR(3)))$} 
  By the remark above, by  Theorem~\ref{thm:O(G)_d GxG-module} and the description of $\Lambda_d(\OR(3))$ in the first paragraph of Section~\ref{subsec:hilbertseries O(3)} 
 (recall that each irreducible $\OR(3)$-module restricts to an  irreducible $\SO(3)$-module) we have  
 \[\dim(\coor(\cone(\OR(3)))_d^{U\times U})=\begin{cases} d+1& \mbox{ for }d\ge 2\mbox{ or }d=0\\ 1&\mbox{ for }d=1\end{cases}.\] 
 It follows that 
 \[\mathrm{H}(\coor(\cone(\OR(3)))^{U\times U};t)=\frac{d}{dt}\left(\frac{1}{1-t}\right)-t
 =\frac{1-t+2t^2-t^3}{(1-t)^2}
 =\frac{1+t^2+t^3-t^4}{(1-t)(1-t^2)}.\] 
 
 By a linear change of coordinates we may switch  to the orthogonal group preserving the non-degenerate quadratic form associated to the 
symmetric matrix 
\[\beta:=\left(\begin{array}{ccc}0 & 0 & 1 \\0 & 1 & 0 \\1 & 0 & 0\end{array}\right), \mbox{ so consider }\] 
\[\OR(3,\beta)=\{M\in \mathbb{C}^{3\times 3}\colon M^T\beta M=\beta\}, \quad 
\SO(3,\beta)=\{M\in \OR(3,\beta)\mid \det(M)=1\}. \]

 The Lie algebra of a Borel subgroup $B$ in $\SO(3,\beta)$ is spanned by 
 $E_{11}-E_{33}$, $E_{12}-E_{23}$, where $E_{ij}$ stands for the matrix unit having entry $1$ in the $(i,j)$ position and zero 
 in all other positions. The element $E_{11}-E_{33}$ spans the Lie algebra of the maximal torus 
 \[\mathbb{T}=\{\left(\begin{array}{ccc}z & 0 & 0 \\0 & 1 & 0 \\0 & 0 & z^{-1}\end{array}\right)\mid z\in\mathbb{C}^\times\}\]
 in   $\SO(3,\beta)$,  
 and the Lie algebra of the unipotent radical $U$ is spanned over $\mathbb{C}$ by 
 $E_{12}-E_{23}$. 
 Write $x_{ij}$ for the standard coordinate functions on $\mathbb{C}^{3\times 3}$ (and also for their restriction to the cone of $ \OR(3,\beta)$), and 
 set 
 \[X=\left(\begin{array}{ccc}x_{11} & x_{12} & x_{13} \\x_{21} & x_{22} & x_{23} \\x_{31} & x_{32} & x_{33} \end{array}\right).\]
 For a square matrix $M$ with entries in a commutative ring, we write $\det (M)$  and $\mathrm{Tr}(M)$ for the determinant of $M$ and 
 for the trace (the sum of the diagonal entries) of $M$. 
 Explicit computation yields the following bases in $\coor(\cone(G))_d^{U\times U}$ for $d=1,2,3$: 
 \begin{align*} 
 \coor(\cone( \OR(3,\beta)))_1^{U\times U}&=\mathrm{Span}_{\mathbb{C}}\{x_{31}\} \\
 \coor(\cone( \OR(3,\beta)))_2^{U\times U}&=\mathrm{Span}_{\mathbb{C}} \{x_{31}^2, \quad x_{31}x_{22}-x_{21}x_{32},\quad \mathrm{Tr}(X^T\beta X\beta)\} \\
 \coor(\cone( \OR(3,\beta)))_3^{U\times U}&=\mathrm{Span}_{\mathbb{C}}\{x_{31}^3, \quad x_{31}(x_{31}x_{22}-x_{21}x_{32}),\quad x_{31}\mathrm{Tr}(X^T\beta X\beta), 
 \quad \det(X) \}
 \end{align*} 
 So up to degree $3$, the algebra $\coor(\cone(\OR(3)))^{U\times U}$ is generated by $x_{31}$, $x_{31}x_{22}-x_{21}x_{32}$, 
 $\mathrm{Tr}(X^T\beta X\beta)$, $\det(X)$.  
The above $U\times U$-invariants are in fact invariant as elements in $\coor(\mc^{n\times n})$. They are   
$\mathbb{T}$-eigenvectors, so they are highest weight vectors in the $\SO(3,\beta)\times \SO(3,\beta)$-module $\coor(\mc^{n\times n})$.

 \subsection{$U\times U$-invariants in $\coor(\cone(\OR(4)))$} 
 
 \begin{proposition}
 We have 
 \[\mathrm{H}(\coor(\cone(\OR(4))^{U\times U};t)
 =\frac{1+ 3 t^{2} + t^{3} + t^{4} - 2 t^{5} - t^{6}+ t^{7}  }{(1-t)(1-t^2)^2}.\]
 \end{proposition}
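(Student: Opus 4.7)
The strategy mirrors the $\OR(3)$ calculation at the start of this section: since $\SO(4)$ is the identity component of $\OR(4)$, $\dim\coor(\cone(\OR(4)))_d^{U\times U}$ equals the number of irreducible $\SO(4)\times\SO(4)$-summands of $\coor(\cone(\OR(4)))_d$. By Theorem~\ref{thm:O(G)_d GxG-module}, this reduces to counting, for each $\lambda\in\Lambda(\OR(4))_{d-2k}$ with $0\le k\le\lfloor d/2\rfloor$, the number of $\SO(4)\times\SO(4)$-summands of $V_\lambda\otimes V_\lambda$ after restriction. Thanks to the branching rule \eqref{eq:O-SO branching}, this local number is $1$ when $V_\lambda\downarrow^{\OR(4)}_{\SO(4)}$ is irreducible and $4$ when $V_\lambda\downarrow^{\OR(4)}_{\SO(4)}$ is a sum of two non-isomorphic $\SO(4)$-irreducibles.

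Next I would go through $\Lambda(\OR(4))_d$ partition by partition: $V_{(d,0,0,0)}$ restricts irreducibly, $V_{(d-k,k,0,0)}$ with $1\le k\le\lfloor d/2\rfloor$ splits into two, $V_{(d-2,1,1,0)}$ is $\SO(4)$-isomorphic to $V_{(d-2,0,0,0)}$ (this identification is already recorded in the proof of the $\OR(4)$ Hilbert series) and hence restricts irreducibly, and $V_{(1,1,1,1)}$ is the sign representation, restricting trivially to $W_{(0,0)}$. Tallying contributions, the number $b_d$ of $\SO(4)\times\SO(4)$-summands of $\bigoplus_{\lambda\in\Lambda(\OR(4))_d}V_\lambda\otimes V_\lambda$ works out to $b_0=b_1=1$, $b_2=5$, $b_3=6$, $b_4=11$ (with the extra $+1$ coming from $V_{(1,1,1,1)}$), and $b_d=2+4\lfloor d/2\rfloor$ for $d\ge 5$.

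Finally, the desired Hilbert series is $B(t)/(1-t^2)$, where $B(t)=\sum_{d\ge 0}b_d t^d$, since convolution with $\sum_{k\ge 0}t^{2k}=1/(1-t^2)$ implements the sum over $k$ coming from Theorem~\ref{thm:O(G)_d GxG-module}. Splitting $B(t)$ into the four families of contributions gives
\[B(t)=\frac{1}{1-t}+\frac{t^3}{1-t}+\frac{4t^2}{(1-t)^2(1+t)}+t^4,\]
and combining these fractions over the common denominator $(1-t)(1-t^2)^2$ produces the stated rational function. The only delicate point is the bookkeeping for the two exceptional partitions $(d-2,1,1,0)$ (present for $d\ge 3$) and $(1,1,1,1)$ (only in degree $4$), which restrict irreducibly rather than splitting; everything else is a routine generating-function manipulation.
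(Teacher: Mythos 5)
Your proposal is correct and follows essentially the same route as the paper: both count the $\SO(4)\times\SO(4)$-summands of $\coor(\cone(\OR(4)))_d$ via Theorem~\ref{thm:O(G)_d GxG-module} and the branching rule \eqref{eq:O-SO branching}, and your per-degree counts $b_d$ agree exactly with the paper's $N_{\OR(4)}(d)$ (including the exceptional values $5$, $6$, $11$ in degrees $2$, $3$, $4$). The only difference is cosmetic: you assemble the answer as $B(t)/(1-t^2)$ with $B(t)$ split into four rational pieces, whereas the paper first derives the quasi-polynomial $2f^2+4f+1$ (corrected by $-t^2$ in degree $2$) and sums it via binomial coefficients; both yield the stated numerator $1+3t^2+t^3+t^4-2t^5-t^6+t^7$.
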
 
 \begin{proof} 
For $\lambda\in \Lambda(\OR(4))$ denote by $\epsilon(\lambda)\in \{1,2\}$ the number of irreducible $\SO(4)$-module direct summands in the 
irreducible $\OR(4)$-module $V_\lambda$, hence $\varepsilon(\lambda)^2$ is the number of irreducible $\SO(4)\times \SO(4)$-module direct summands in the 
irreducible $\OR(4)\times \OR(4)$-module $V_\lambda\otimes V_\lambda$. 
Set $N_{\OR(4)}(d):=\sum_{\lambda\in \Lambda(\OR(4))_d}\epsilon(\lambda)^2$.  
By Theorem~\ref{thm:O(G)_d GxG-module} we have 
 \[\dim_\mc(\coor(\cone(\OR(4)))^{U\times U}_d)=\sum_{k=0}^{\lfloor d \rfloor}N_{\OR(4)}(d-2k).\] 
Now 
\[N_{\OR(4)}(d)=\begin{cases} 2+4\lfloor d/2\rfloor &\mbox{ if }d\ge 5 \mbox{ or }d=3 \\
1 &\mbox{ if }d=0,1 \\
5 &\mbox{ if }d=2 \\
11 &\mbox{ if }d=4.
\end{cases}\] 
We conclude that for $d=2f$ or $d=2f+1$ with $d\ge 5$ we have 
\[\dim_\mc(\coor(\cone(G))^{U\times U}_d)=2 f^{2} + 4 f + 1=-\binom{f}{0}-2\binom{f+1}{1}+4\binom{f+2}{2},\] 
and one can check that in fact the same holds except for $d=2$, for which 
$2 f^{2} + 4 f + 1\vert_{f=1}-\dim_\mc(\coor(\cone(G))^{U\times U}_d)=7-6=1$. 
It follows that 
\[\mathrm{H}(\coor(\cone(\OR(4))^{U\times U};t)=-t^2+\left(\frac{-1}{1-t^2}-\frac{2}{(1-t^2)^2}+\frac{4}{(1-t^2)^3}\right)(1+t),\] 
from which our statement easily follows. 
\end{proof} 

\subsection{$U\times U$-invariants in $\coor(\cone(\SO(4)))$} 
 
 \begin{proposition}
 We have 
 \[\mathrm{H}(\coor(\cone(\SO(4))^{U\times U};t)
 =\frac{1+  t^{2}}{(1-t)(1-t^2)^2}.\]
 \end{proposition}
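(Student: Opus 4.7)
The plan is to mirror closely the argument just given for $\OR(4)$, but with Theorem~\ref{thm:O(SO(2m))_d GxG-module} playing the role of Theorem~\ref{thm:O(G)_d GxG-module}. Since $\SO(4)$ is connected, $\dim_\mc \coor(\cone(\SO(4)))_d^{U\times U}$ is simply the number of irreducible $\SO(4)\times \SO(4)$-module summands of $\coor(\cone(\SO(4)))_d$, and Theorem~\ref{thm:O(SO(2m))_d GxG-module} reduces this to counting weights:
\[
\dim_\mc \coor(\cone(\SO(4)))_d^{U\times U} \;=\; \sum_{k=0}^{\lfloor d/2 \rfloor} |\Lambda(\SO(4))_{d-2k}|.
\]
In contrast with the $\OR(4)$ case, no multiplicity $\varepsilon(\lambda)$ appears, because each $W_\lambda$ is already irreducible as an $\SO(4)$-module; this is what will produce a simpler generating function.

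Next I would count $|\Lambda(\SO(4))_d|$. From the description $\Lambda(\SO(4))_d=\{(d-k,k),\ (d-k,-k)\colon 0\le k\le \lfloor d/2\rfloor\}$, with the understanding that $(d,0)=(d,-0)$ is counted once, I get the uniform formula $|\Lambda(\SO(4))_d| = 1 + 2\lfloor d/2\rfloor$ valid for every $d\ge 0$. A short case split on the parity of $d$ then telescopes the inner sum: writing $d=2f$ or $d=2f+1$, the summands $d-2k+1$ (for even $d$) respectively $d-2k$ (for odd $d$) add up in either case to $(f+1)^2$, so
\[
\dim_\mc \coor(\cone(\SO(4)))_d^{U\times U} = (f+1)^2\quad\text{whenever } d\in\{2f,\,2f+1\}.
\]

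It then remains to sum the generating function. Using the standard identity $\sum_{f\ge 0}(f+1)^2 s^f = (1+s)/(1-s)^3$ with $s=t^2$, I obtain
\[
\sum_{f=0}^{\infty}(f+1)^2(t^{2f}+t^{2f+1}) \;=\; (1+t)\cdot\frac{1+t^2}{(1-t^2)^3} \;=\; \frac{1+t^2}{(1-t)(1-t^2)^2},
\]
which is exactly the claimed Hilbert series. There is no serious obstacle in this proof: the only point requiring a moment of care is the degenerate case $k=0$ in the enumeration of $\Lambda(\SO(4))_d$, where $(d,0)$ and $(d,-0)$ must not be double-counted. Unlike for $\OR(4)$, there are no sporadic small-degree corrections, which explains why the final formula has such a compact closed form.
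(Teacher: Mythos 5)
Your proof is correct and follows essentially the same route as the paper: apply Theorem~\ref{thm:O(SO(2m))_d GxG-module}, count $|\Lambda(\SO(4))_{d-2k}|=1+2\lfloor (d-2k)/2\rfloor$ to get $(f+1)^2$ in degrees $d=2f,2f+1$, and sum the generating function. The only cosmetic difference is that you use the identity $\sum_{f\ge 0}(f+1)^2s^f=(1+s)/(1-s)^3$ directly, while the paper rewrites $(f+1)^2$ as $-\binom{f+1}{1}+2\binom{f+2}{2}$ before summing; the two computations are equivalent.
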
 
 \begin{proof} 
By Theorem~\ref{thm:O(SO(2m))_d GxG-module} we have 
 \[\dim_\mc(\coor(\cone(\SO(4)))^{U\times U}_d)=\sum_{k=0}^{\lfloor d/2 \rfloor}|\Lambda(\SO(4))_{d-2k}|.\]
 Moreover,  $|\Lambda(\SO(4))_d|=1+2\lfloor d/2\rfloor$. 
 It follows that for $d=2f$ or $2f+1$ we have 
 \[\dim_\mc(\coor(\cone(\SO(4)))^{U\times U}_d)=f^{2} + 2 f + 1
 =-\binom{f+1}{1}+2\binom{f+2}{2},\]
 implying in turn that 
 \[\mathrm{H}(\coor(\cone(\SO(4))^{U\times U};t)=\left(\frac{-1}{(1-t^2)^2}+\frac{2}{(1-t^2)^3}\right)(1+t).\] 
 \end{proof}

\subsection{$U\times U$-invariants in $\coor(\cone(\SP(4)))$} 
 
 \begin{proposition}
 We have 
 \[\mathrm{H}(\coor(\cone(\SP(4))^{U\times U};t)
 =\frac{1}{(1-t)(1-t^2)^2}.\]
 \end{proposition}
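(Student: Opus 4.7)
The plan is to mirror the proofs of the two preceding propositions in this subsection, exploiting that $\SP(4)$ is connected (so $\SP(4)^{\circ}=\SP(4)$) and every irreducible $\SP(4)$-module $V_{\lambda}$ carries a one-dimensional space of $U$-fixed vectors (the highest weight line). First I apply Theorem~\ref{thm:O(G)_d GxG-module} to $G=\SP(4)$ and take $U\times U$-invariants on each summand: since $(V_{\lambda}\otimes V_{\lambda})^{U\times U}=V_{\lambda}^{U}\otimes V_{\lambda}^{U}$ is one-dimensional for every $\lambda\in \Lambda(\SP(4))$, I obtain
\[\dim_{\mc}\coor(\cone(\SP(4)))_{d}^{U\times U}=\sum_{k=0}^{\lfloor d/2\rfloor}|\Lambda(\SP(4))_{d-2k}|.\]

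Next I compute the inner count. From $\Lambda(\SP(4))_{d}=\{(d-k,k)\mid k=0,1,\dots,\lfloor d/2\rfloor\}$ I read off $|\Lambda(\SP(4))_{d}|=\lfloor d/2\rfloor+1$. Substituting and separating by parity, writing $d=2f$ or $d=2f+1$, the sum telescopes to $\sum_{j=1}^{f+1}j=\binom{f+2}{2}$ in both cases. Thus
\[\dim_{\mc}\coor(\cone(\SP(4)))_{d}^{U\times U}=\binom{\lfloor d/2\rfloor+2}{2}.\]

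Finally I convert this to a rational function. Splitting the generating function by parity yields
\[\sum_{d=0}^{\infty}\binom{\lfloor d/2\rfloor+2}{2}t^{d}=(1+t)\sum_{f=0}^{\infty}\binom{f+2}{2}t^{2f}=\frac{1+t}{(1-t^{2})^{3}}=\frac{1}{(1-t)(1-t^{2})^{2}},\]
which is the claimed closed form. There is essentially no obstacle beyond bookkeeping; the only substantive input is the one-dimensional $U$-fixed space in each irreducible, which is automatic because $\SP(4)$ is connected, and the rest is the same routine partial fraction simplification used for $\SO(4)$.
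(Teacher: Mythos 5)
Your proof is correct and follows essentially the same route as the paper: Theorem~\ref{thm:O(G)_d GxG-module} plus connectedness of $\SP(4)$ gives $\dim_\mc\coor(\cone(\SP(4)))_d^{U\times U}=\sum_{k=0}^{\lfloor d/2\rfloor}|\Lambda(\SP(4))_{d-2k}|$, then $|\Lambda(\SP(4))_d|=1+\lfloor d/2\rfloor$ yields $\binom{f+2}{2}$ for $d=2f$ or $2f+1$, and the parity split gives $\frac{1+t}{(1-t^2)^3}=\frac{1}{(1-t)(1-t^2)^2}$, exactly as in the paper.
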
 
 \begin{proof} 
By Theorem~\ref{thm:O(G)_d GxG-module} we have 
 \[\dim_\mc(\coor(\cone(\SP(4)))^{U\times U}_d)=\sum_{k=0}^{\lfloor d/2 \rfloor}|\Lambda(\SP(4))_{d-2k}|.\]
 Moreover,  $|\Lambda(\SP(4))_d|=1+\lfloor d/2\rfloor$. 
 It follows that for $d=2f$ or $2f+1$ we have 
 \[\dim_\mc(\coor(\cone(\SO(4)))^{U\times U}_d)=\frac{1}{2} f^{2} + \frac{3}{2} f + 1
 =\binom{f+2}{2},\]
 implying in turn that 
 \[\mathrm{H}(\coor(\cone(\SO(4))^{U\times U};t)=\frac{1+t}{(1-t^2)^3}.\] 
 \end{proof}

%%%%%%%%%%%%%%%%%%%%%%%%%%%%%%%%%%%%%%%%

\section{The vanishing ideal of the  cone of the orthogonal group}\label{sec:ideal} 

Recall that for $1\le i,j\le n$ we denote by $x_{ij}\in \coor(\mc^{n\times n})$ the coordinate function mapping a matrix $M\in \mc^{n\times n}$ 
to its $(i,j)$-entry, so $\coor(\mc^{n\times n})$ is the polynomial algebra $\mc[x_{ij}\mid 1\le i,j\le n]$.  
Denote by $X$ the $n\times n$ generic matrix whose $(i,j)$-entry is $x_{ij}$. 
Write $X^T$ for the transpose of $X$, and for an $n\times n$ matrix $M$ over $\coor(\mc^{n\times n})$ write 
$M_{ij}$ for the $(i,j)$-entry of $M$. 
The following statement is due to H. Weyl: 

\begin{theorem}\label{thm:ideal_generators} {\rm \cite[Chapter V.A.4]{weyl}}
The ideal $\ideal(\cone(\OR(n)))$ is generated by the following $n^2+n-2$ quadratic polynomials: 
\begin{align*} (X^TX)_{ij},\quad (XX^T)_{ij} \quad \text{for } 1\le i<j\le n, 
\\ (X^TX)_{11}-(X^TX)_{kk}, \quad (XX^T)_{11}-(XX^T)_{kk} \quad \text{for }k=2,\dots,n
\end{align*}
\end{theorem}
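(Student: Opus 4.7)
The plan has three parts: verify the easy inclusion, establish the set-theoretic equality of zero sets, and upgrade this to an equality of ideals.

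For the easy direction, let $X=cM$ with $c\in\mc$ and $M\in\OR(n)$. Then $X^TX = c^2 M^TM = c^2 I_n$ and $XX^T = c^2 I_n$, so $(X^TX)_{ij} = (XX^T)_{ij} = 0$ whenever $i\ne j$, and each diagonal entry of $X^TX$ and of $XX^T$ equals $c^2$. Hence each of the listed polynomials vanishes on $\mc\cdot \OR(n)$, and therefore on its Zariski closure $\cone(\OR(n))$. Writing $J$ for the ideal generated by the listed polynomials, this gives $J \subseteq \ideal(\cone(\OR(n)))$.

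Next I would establish the set-theoretic identity $V(J) = \cone(\OR(n))$. For $X \in V(J)$ the generators force $X^TX = \lambda I_n = XX^T$ with $\lambda := (X^TX)_{11}$. If $\lambda\ne 0$, any square root $c$ of $\lambda$ yields $c^{-1}X\in \OR(n)$, whence $X = c(c^{-1}X)\in\cone(\OR(n))$. If $\lambda=0$, one must verify that every $X$ with $X^TX = XX^T = 0$ belongs to the Zariski closure of $\mc\cdot \OR(n)$; this can be done via explicit rational curves in $\OR(n,\mc)$ which, after rescaling, converge to isotropic matrices. A two-dimensional prototype is
\[M_s := \tfrac{1}{2}\begin{pmatrix} s+s^{-1} & \mathrm{i}(s-s^{-1}) \\ -\mathrm{i}(s-s^{-1}) & s+s^{-1} \end{pmatrix} \in \OR(2,\mc), \quad s\in\mc^\times,\]
for which $(2/s)M_s$ converges as $s\to\infty$ to the nonzero matrix $\bigl(\begin{smallmatrix}1 & \mathrm{i}\\-\mathrm{i} & 1\end{smallmatrix}\bigr)$, which is annihilated by both $X^TX$ and $XX^T$. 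Embedding analogous families into $\OR(n,\mc)$ and exploiting the $\OR(n)\times\OR(n)$-action produces enough limit matrices to cover the full null locus $\{X: X^TX = XX^T = 0\}$.

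To upgrade set-theoretic equality to equality of ideals---equivalently, to prove that $J$ is radical---I would exploit the $\OR(n)\times\OR(n)$-equivariance of $J$. A degree-$2$ analysis using Cauchy's formula for $\mc[x_{ij}]$ as a $\GL(n)\times\GL(n)$-module together with the $\GL(n)\to\OR(n)$ branching rule shows that $J_2$ is precisely the isotypic component $V_{(2,0,\ldots,0)}\otimes V_{(0,\ldots,0)}\oplus V_{(0,\ldots,0)}\otimes V_{(2,0,\ldots,0)}$ of $\mc[x_{ij}]_2$, where $V_{(2,0,\ldots,0)}$ denotes the irreducible $\OR(n)$-module of traceless symmetric matrices; the dimension count $\dim V_{(2,0,\ldots,0)} = (n^2+n-2)/2$ confirms that these $n^2+n-2$ generators are linearly independent and span this component. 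Combined with Theorem~\ref{thm:O(G)_d GxG-module} this yields $(\mc[x_{ij}]/J)_2 \cong \coor(\cone(\OR(n)))_2$ as $\OR(n)\times\OR(n)$-modules.

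The main obstacle is extending this isomorphism to all degrees. My plan would be a Hilbert series comparison: use Theorem~\ref{thm:O(G)_d GxG-module} to express $\mathrm{H}(\coor(\cone(\OR(n)));t)$ via the branching multiplicities $[S_\lambda(\mc^n)\downarrow^{\GL(n)}_{\OR(n)}:V_\mu]$, and then show that the natural surjection $\mc[x_{ij}]/J\twoheadrightarrow \coor(\cone(\OR(n)))$ is dimension-preserving in every degree. An alternative approach would be to prove $\mc[x_{ij}]/J$ is Cohen--Macaulay of dimension $\binom{n}{2}+1$; combined with the set-theoretic equality and generic reducedness on the open subset $\{\det(X)\ne 0\}$, this would force $J = \ideal(\cone(\OR(n)))$. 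However, establishing Cohen--Macaulayness appears comparably difficult to the original theorem.
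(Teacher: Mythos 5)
Your proposal proves only the easy containment and then leaves the two substantive steps unproven, so it has a genuine gap. First, the set-theoretic claim $V(J)=\cone(\OR(n))$ is not established in the crucial case $\lambda=0$: you exhibit one $2\times 2$ limit curve and assert that ``analogous families'' plus the $\OR(n)\times\OR(n)$-action cover the whole null locus $\{X\colon X^TX=XX^T=0\}$, but this requires an actual orbit/normal-form analysis of matrices whose row and column spaces are totally isotropic, which you do not carry out. Second, and more seriously, the passage from set-theoretic equality to the equality of ideals $J=\ideal(\cone(\OR(n)))$ is exactly the content of Weyl's theorem, and neither of your two routes closes it: the Hilbert-series comparison requires knowing $\dim_\mc(\mc[x_{ij}]/J)_d$ in every degree, and you give no method to compute it (Theorem~\ref{thm:O(G)_d GxG-module} gives the dimensions of $\coor(\cone(\OR(n)))_d$, i.e.\ of $\mc[x_{ij}]_d/\ideal(\cone(\OR(n)))_d$, but says nothing about the quotient by $J$ --- that is precisely what must be bounded from above); and you yourself concede that the Cohen--Macaulay route is ``comparably difficult to the original theorem.'' Your degree-$2$ isotypic computation is correct (and matches the remark following the theorem in the paper), but it only shows $J_2=\ideal(\cone(\OR(n)))_2$, not $J_d=\ideal(\cone(\OR(n)))_d$ for $d\ge 3$.

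For comparison, the paper avoids both geometric radicality questions and Hilbert-series counting by working degree by degree through Schur--Weyl type duality: Lemma~\ref{lemma:schur-weyl} identifies $\coor(\mc^{n\times n})_d$ with the dual of the Schur algebra $\uuu_d$, under which $\ideal(\cone(\OR(n)))_d$ becomes the kernel of the restriction $\uuu_d^*\twoheadrightarrow \uuu(G)_d^*$; then the first fundamental theorem for orthogonal vector invariants (the Brauer centralizer algebra being generated by the symmetric group action and the single contraction operator $B$) together with the double centralizer theorem gives $\uuu(G)_d=\uuu_d\cap Z(\{B\})$, so that kernel is spanned by the coordinate functions of $Y\mapsto BY-YB$ restricted to $\uuu_d$; an explicit computation shows these functions correspond to monomial multiples of the listed quadrics, i.e.\ lie in $Q_d$. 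If you want to salvage your outline, you would need either a genuine upper bound on $\dim(\mc[x_{ij}]/J)_d$ (e.g.\ via a straightening law or Gr\"obner-type argument, which is substantial work) or to import an FFT-style input as the paper does.
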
 

\begin{remark} (i) Clearly, a minimal homogeneous generating set of the ideal $\ideal(\cone(\OR(n))$ spans an $\OR(n)\times \OR(n)$-invariant subspace in 
$\coor(\mc^{n\times n})$. 
The $\OR(n)\times \OR(n)$-module structure of the degree $2$ homogeneous component of $\ideal(\cone(\OR(n))$ 
is given (using the notation of Section~\ref{sec:peter-weyl}) as follows: 
\[\ideal(\cone(\OR(n))_2\cong V_{(2,0,\dots,0)}\otimes V_{(0,\dots,0)}\oplus V_{(0,\dots,0)}\otimes V_{(2,0,\dots,0)}\] 
(recall that $V_{(0,\dots,0)}$ is the $1$-dimensional trivial $\OR(n)$-module, 
and $V_{(2,0,\dots,0)}$ can be identified with the space of $n$-variable spherical harmonics of degree $2$).  
This shows that the generating set of $\ideal(\cone(\OR(n))$ provided by Theorem~\ref{thm:ideal_generators} is minimal. 

(ii) A statement similar to Theorem~\ref{thm:ideal_generators} holds for the vanishing ideal of the cone of the symplectic group, see \cite[Chapter X.5]{weyl}.  
\end{remark}

\begin{example}\label{example:cone} Denoting by $\mathrm{i}$ the imaginary complex unit with $\mathrm{i}^2=-1$, 
the matrix $A:=\left(\begin{array}{ccc}1 & \mathrm{i} & 0 \\ -\mathrm{i} & 1 & 0 \\ 0 & 0 & 0\end{array}\right)$ 
satisfies $AA^T=0=A^TA$, hence all the generators of $\mathcal{I}(\cone(\OR(3)))$ given in Theorem~\ref{thm:ideal_generators} 
vanish on $A$. It follows that  $A$ belongs to the cone $\cone(\OR(3))$. However, $A$ is not a scalar multiple of an element of $\OR(3)$. 
\end{example} 

For sake of completeness we provide  a  proof  of Theorem~\ref{thm:ideal_generators} in modern language. 
Write $V=\mc^n$, and let $e_1,\dots,e_n$ denote the standard basis vectors. Identify 
$\mc^{n\times n}$ with $\een_\mc(V)$ in the standard way, then $G=\OR(n)$ is identified with the subgroup  of the general linear group $\GL(V)$ consisting of the linear transformations that preserve the non-degenerate symmetric bilinear form $\langle-\mid-\rangle$ on $V$ given by 
\[\langle e_i\mid e_j\rangle=\begin{cases} 1\mbox{ if }i=j\\
0\mbox{ if }i\neq j. 
\end{cases} \]
For a finite dimensional $\mc$-vector space $W$ we write $W^*$ for the dual space of $W$, 
and for a positive integer $d$ we write $\mathrm{T}^d(W)=W\otimes \cdots \otimes W$ for the $d$-fold tensor power of $W$. We shall make use of the standard identifications  
$W\otimes W^*\cong \een_\mc(W)$ and $\mathrm{T}^d(W^*)\cong \mathrm{T}^d(W)^*$. 
These induce an identification 
\[\mathrm{T}^d(\een_\mc(V))\cong  \een_\mc(\mathrm{T}^d(V))\] 
via 
\[\mathrm{T}^d(V\otimes V^*)\cong V\otimes V^*\otimes \cdots  V\otimes V^*
\cong \mathrm{T}^d(V)\otimes \mathrm{T}^d(V^*)\cong \mathrm{T}^d(V)\otimes \mathrm{T}^d(V)^*.\] 
Write $\uuu_d$ for the $\mc$-subspace 
\[\uuu_d=\mathrm{Span}_\mc\{M\otimes \cdots \otimes M \in 
\een_\mc(\mathrm{T}^d(V))\mid M\in \een_\mc(V)\}.\] 
Clearly $\uuu_d$ is a subalgebra of the noncommutative $\mc$-algebra $\een_\mc(\mathrm{T}^d(V))$, called a {\it Schur algebra}. It contains the subalgebra 
\[\uuu(G)_d=\mathrm{Span}_\mc\{g\otimes \cdots \otimes g \in 
\een_\mc(\mathrm{T}^d(V))\mid g\in \OR(n)\}.\] 
Recall that the polynomial ring $\coor(\mc^{n\times n})$ is graded in the usual way, and 
$\coor(\mc^{n\times n})_d$ stands for the degree $d$ homogeneous component. 

\begin{lemma}\label{lemma:schur-weyl} 
There exists a unique  $\mc$-vector space isomorphism 
\[\coor(\mc^{n\times n})_d\to \uuu_d^*, \qquad f\mapsto \tilde f\]  
such that for any $f\in \coor(\mc^{n\times n})_d$ and $M\in \mc^{n\times n}$ we have  
\[\tilde f(M\otimes \cdots \otimes M)=d!f(M).\] 
\end{lemma}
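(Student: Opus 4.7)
The plan is to define a map in the opposite direction and show it is an isomorphism, then invert and rescale by $d!$. Specifically, consider $\Psi\colon \uuu_d^* \to \coor(\mc^{n\times n})_d$ given by $\Psi(\phi)(M) := \phi(M\otimes\cdots\otimes M)$ for $\phi\in\uuu_d^*$ and $M\in\mc^{n\times n}$. This is well-defined as a map into $\coor(\mc^{n\times n})_d$ because, working in the standard basis $\{E_{i_1j_1}\otimes\cdots\otimes E_{i_dj_d}\}$ of $\mathrm{T}^d(\een_\mc(V))$, the coefficients of $M^{\otimes d}$ are the degree-$d$ monomials $M_{i_1j_1}\cdots M_{i_dj_d}$ in the entries of $M$, so $\phi(M^{\otimes d})$ is a homogeneous polynomial of degree $d$ on $\mc^{n\times n}$. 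Linearity in $\phi$ is clear.

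Injectivity of $\Psi$ holds essentially by definition of $\uuu_d$: if $\phi(M^{\otimes d})=0$ for every $M\in \een_\mc(V)$, then $\phi$ vanishes on a spanning set of $\uuu_d$ and hence is zero. For surjectivity I would argue by a dimension count. Since $\mathrm{char}(\mc)=0$, the polarization identity shows that $\uuu_d$ equals the $S_d$-invariant subspace $\mathrm{Sym}^d(\een_\mc(V))\subset \mathrm{T}^d(\een_\mc(V))$: each pure symmetric tensor is a $\mathbb{Q}$-linear combination of diagonal tensors $(\epsilon_1M_1+\cdots+\epsilon_dM_d)^{\otimes d}$ with $\epsilon_i\in\{0,1\}$, so the span of diagonal tensors coincides with the symmetric part. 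Consequently $\dim_\mc\uuu_d=\binom{n^2+d-1}{d}=\dim_\mc\coor(\mc^{n\times n})_d$, and the injection $\Psi$ is an isomorphism.

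Setting $\tilde f:=d!\cdot\Psi^{-1}(f)$ then yields the map in the lemma: by construction $\tilde f(M^{\otimes d})=d!\cdot\Psi(\Psi^{-1}(f))(M)=d!\,f(M)$, as required. Uniqueness is automatic, because prescribing the values on the spanning set $\{M^{\otimes d}\mid M\in\mc^{n\times n}\}$ determines a linear functional on $\uuu_d$. The only nontrivial step is the identification $\uuu_d=\mathrm{Sym}^d(\een_\mc(V))$, where the polarization identity (and thus characteristic zero) is essential; everything else is a routine check.
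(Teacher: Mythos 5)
Your proposal is correct and follows essentially the same route as the paper: both hinge on the characteristic-zero fact that $\uuu_d$ coincides with the space $\mathrm{T}^d(\een_\mc(V))^{S_d}$ of symmetric tensors (polarization), and your map $\Psi$ is precisely the restitution map whose inverse the paper constructs via multilinearization of $f$. The only difference is that you obtain surjectivity from the dimension count $\dim_\mc\uuu_d=\binom{n^2+d-1}{d}=\dim_\mc\coor(\mc^{n\times n})_d$ instead of writing down the inverse explicitly, which is an equally valid way to finish.
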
 
\begin{proof}  
Consider the isomorphisms 
\[\coor(\mc^{n\times n})_d\cong (\mathrm{T}^d(\een_\mc(V))^*)^{S_d}\cong 
 (\mathrm{T}^d(\een_\mc(V))^{S_d})^*.\] 
The first of the above isomorphisms is given by the standard {\it multilinearization process}:  it assigns to the homogeneous form $f\in \coor(\mc^{n\times n})_d$ the linear functional  
$\hat f\in \mathrm{T}^d(\een_\mc(V))^*$, characterized by the property that for 
$M_1,\dots,M_d\in \een_\mc(V)$, 
$\hat f(M_1\otimes \cdots \otimes M_d)$ is the value at $(M_1,\dots,M_d)$  
of  the multilinear component of the function $\een_\mc(V)\times \cdots \times \een_\mc(V)\to \mc$, $(M_1,\dots,M_d)\mapsto f(M_1+\cdots+M_d)$. 
The symmetric group $S_d$ acts on $\mathrm{T}^d(\een_\mc(V))$ by permuting the tensor 
components. Clearly  $\hat f(M_1\otimes \cdots \otimes M_d)=\hat f(M_{s(1)}\otimes 
\cdots \otimes M_{s(d)})$ holds for any permutation $s\in S_d$, so 
$\hat f$ belongs to the subspace $(\mathrm{T}^d(\een_\mc(V))^*)^{S_d}$. 
Moreover, $f\mapsto \hat f$ is an isomorphism, because for any $h\in (\mathrm{T}^d(\een_\mc(V))^*)^{S_d}$ we have that $h=\hat f$ where $f(M)=\frac{1}{d!}h(M\otimes \cdots \otimes M)$. The second isomorphism is given by restriction of an $S_d$-invariant linear functional 
on $\mathrm{T}^d(\een_\mc(V))$ to the subspace  $\mathrm{T}^d(\een_\mc(V))^{S_d}$ of symmetric tensors. 
This sends $\hat f\mapsto \tilde f$. 
Observe finally $\mathrm{T}^d(\een_\mc(V))^{S_d}$ is spanned by $M\otimes \cdots \otimes M$ 
as $M$ ranges over $\een_\mc(V)$, see for example \cite[Lemma 9.1.1]{procesi}. 
So we have $\mathrm{T}^d(\een_\mc(V))^{S_d}=\uuu_d$ by definition of the latter. 
\end{proof} 

For a subset $S\subset \een_\mc(\mathrm{T}^d(V))$ denote by 
$Z(S)$ the centralizer of $S$ in the algebra $\een_\mc(\mathrm{T}^d(V))$; clearly 
$Z(S)$ is a subalgebra in $\een_\mc(\mathrm{T}^d(V))$. 
The endomorphism algebra 
$\een_G(\mathrm{T}^d(V))=Z(\uuu(G)_d)$ is generated by the algebra 
$\een_{GL(V)}(\mathrm{T}^d(V))=Z(\uuu_d)$ and a single 
linear transformation $B\in\een_\mc(\mathrm{T}^d(V))$ given by 
\[B(v_1\otimes \cdots \otimes v_d)=\langle v_1\mid v_2\rangle 
\sum_{i=1}^n e_i\otimes e_i\otimes v_3\otimes \cdots v_d,\]  
see for example \cite[Section 11.3.2]{procesi}, where this is deduced from the first fundamental theorem for vector invariants of the orthogonal group (the algebra 
$Z(\uuu(G)_d)$ is called {\it Brauer centralizer algebra}).  
Since $\uuu_d$ and $\uuu(G)_d$ are semisimple subalgebras of $\een_\mc(\mathrm{T}^d(V))$, by the double centralizing theorem for semisimple algebras (see for example \cite[Theorem 6.2.5]{procesi}), one recovers them as the centralizer of their centralizer. 
Thus we have 
\begin{align*}\uuu(G)_d=Z(Z(\uuu(G)_d))=Z(\{B\}\cup Z(\uuu_d))
\\ =Z(\{B\})\cap 
Z(Z(\uuu_d))=\uuu_d\cap Z(\{B\}).
\end{align*}
This can be reformulated as follows: 

\begin{proposition}\label{prop:brauer}  
The kernel of the natural surjection $\uuu_d^*\twoheadrightarrow \uuu(G)_d^*$ 
(given by restriction of linear functionals on $\uuu_d$ to the subspace $\uuu(G)_d$) 
is spanned by the restrictions to $\uuu_d$ of the coordinate functions of the map 
\[\een_\mc(\mathrm{T}^d(V))\to \een_\mc(\mathrm{T}^d(V)),\qquad 
Y\mapsto BY-YB.\] 
\end{proposition}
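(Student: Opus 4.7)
The plan is to exploit the identity $\uuu(G)_d=\uuu_d\cap Z(\{B\})$ derived just above the proposition from the double centralizer theorem, and then dualize it by a routine annihilator argument.

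First I would unpack the phrase ``coordinate functions of the map $Y\mapsto BY-YB$''. Write $\Phi\colon \een_\mc(\mathrm{T}^d(V))\to \een_\mc(\mathrm{T}^d(V))$ for the commutator map $\Phi(Y)=BY-YB$, and fix any basis of the target. Then $\Phi$ is described by finitely many linear functionals $\psi_1,\dots,\psi_N\in \een_\mc(\mathrm{T}^d(V))^*$ characterized by $\ker\Phi=\bigcap_i\ker\psi_i=Z(\{B\})$. Restricting each $\psi_i$ to the subspace $\uuu_d$ yields elements $\psi_i|_{\uuu_d}\in \uuu_d^*$ whose joint zero locus in $\uuu_d$ is
\[\bigcap_{i=1}^N\ker(\psi_i|_{\uuu_d})=\uuu_d\cap \bigcap_{i=1}^N\ker\psi_i=\uuu_d\cap Z(\{B\})=\uuu(G)_d.\]

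Next I would invoke the standard duality principle: for any subspace $W'\subseteq W$ of a finite dimensional vector space cut out as $W'=\bigcap_i\ker(\phi_i)$ by a family $\{\phi_i\}\subset W^*$, the annihilator $(W')^\perp\subseteq W^*$ coincides with $\mathrm{Span}\{\phi_i\}$. Indeed $\mathrm{Span}\{\phi_i\}\subseteq (W')^\perp$ is automatic, and the chain of equalities $\dim(W')+\dim\mathrm{Span}\{\phi_i\}=\dim W=\dim(W')+\dim(W')^\perp$ forces the reverse inclusion. Applying this with $W=\uuu_d$, $W'=\uuu(G)_d$, and $\phi_i=\psi_i|_{\uuu_d}$ shows that the annihilator of $\uuu(G)_d$ inside $\uuu_d^*$ is spanned by $\psi_1|_{\uuu_d},\dots,\psi_N|_{\uuu_d}$.

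Finally I would observe that this annihilator is, by definition, the kernel of the restriction surjection $\uuu_d^*\twoheadrightarrow \uuu(G)_d^*$, which gives the claim. I do not expect a genuine obstacle here: all the substance of the argument has already been carried out before the proposition (namely the first fundamental theorem description $\een_G(\mathrm{T}^d(V))=\langle Z(\uuu_d),B\rangle$ and the ensuing double centralizer identity $\uuu(G)_d=\uuu_d\cap Z(\{B\})$), and what remains is purely formal linear duality. The only mild care needed is to keep the description basis-free, which is the reason for phrasing the conclusion as an annihilator rather than as an explicit system of equations coming from a chosen basis of $\een_\mc(\mathrm{T}^d(V))$.
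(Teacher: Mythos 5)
Your proposal is correct and is essentially the paper's own argument: the paper states the proposition as a direct reformulation of the identity $\uuu(G)_d=\uuu_d\cap Z(\{B\})$ obtained from the double centralizer theorem, and your annihilator/duality computation is precisely the formal linear-algebra step that this reformulation tacitly uses. Nothing is missing.
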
 

\begin{proofofthm}~\ref{thm:ideal_generators}. 
Denote by $Q$ the ideal in $\coor(\mc^{n\times n})$ generated by the $n^2+n-2$ quadratic polynomials in the statement. By definition of $G$ they vanish on $G$, and since they are homogeneous, they vanish on $\cone(G)$ as well. So $Q\subseteq \ideal(\cone(G))$.  
The ideal $Q$ is generated by homogeneous elements. 
Being the vanishing ideal of a cone, $\ideal(\cone(G))$ is also a homogeneous ideal. 
Denote by $Q_d$, $\ideal(\cone(G))_d$ their degree $d$ homogeneous components. 
It remains to prove that $Q_d=\ideal(\cone(G))_d$ for all positive integers $d$. 
By Lemma~\ref{lemma:schur-weyl} the isomorphism 
$\mu:\coor(\mc^{n\times n})_d\to \uuu_d^*$, $f\to \tilde f$ maps $\ideal(\cone(G))_d$ to 
the kernel of the natural surjection $\uuu_d^*\twoheadrightarrow \uuu(G)_d^*$. 
Therefore by Poposition~\ref{prop:brauer} it is sufficient to show that 
the restriction to $\uuu_d$ of the coordinate functions of 
the map $\kappa:\een_\mc(\mathrm{T}^d(V))\to \een_\mc(\mathrm{T}^d(V))$, $Y\mapsto BY-YB$ are contained 
in the space $\mu(Q_d)$. 
Now we compute the restrictions to $\uuu_d$ of the coordinate functions of 
$\kappa$ with respect to the basis $E_{i_1j_1}\otimes \cdots \otimes E_{i_dj_d}$ 
of $\een_\mc(\mathrm{T}^d(V))\cong \mathrm{T}^d(\een_\mc(V))$,  
where 
\[E_{ij}(e_k)=\begin{cases} e_i\mbox{ for }k=j\\
0\mbox{ for }k\neq j. 
\end{cases} \]
In order to do so we need  to point out that under the identification 
$\een_\mc(\mathrm{T}^d(V))\cong \mathrm{T}^d(\een_\mc(V))$ the element $B$ corresponds 
to 
\[B=\sum_{i,j=1}^nE_{ij}\otimes E_{ij}\otimes \mathrm{id}_V\otimes \cdots 
\otimes \mathrm{id}_V. \]
For $M=\sum_{i,j=1}^nM_{ij}E_{ij}\in \een_\mc(V)$  
we have 
\begin{align*}M\otimes M \sum_{k,l=1}^nE_{kl}\otimes E_{kl}
=\sum_{i,j,s,t,k,l}M_{ij}M_{st}E_{ij}E_{kl}\otimes E_{st}E_{kl}
\\=\sum_{i,k,s,l}M_{ik}M_{sk}E_{il}\otimes E_{sl}
=\sum_{i,s,l}(\sum_{k=1}^nM_{ik}M_{sk})E_{il}\otimes E_{sl}
\end{align*}
and 
\begin{align*}(\sum_{k,l=1}^nE_{lk}\otimes E_{lk})M\otimes M 
=\sum_{i,j,s,t,k,l}M_{ji}M_{ts}E_{lk}E_{ji}\otimes E_{lk}E_{ts}
\\=\sum_{i,k,s,l}M_{ki}M_{ks}E_{li}\otimes E_{ls}
=\sum_{i,s,l}(\sum_{k=1}^nM_{ki}M_{ks})E_{li}\otimes E_{ls}
\end{align*}
This shows that the restriction to $\uuu_d$ of the coordinate function 
of the map $\kappa$ corresponding to $E_{il}\otimes E_{sl}\otimes E_{i_3j_3}\otimes \cdots \otimes E_{i_dj_d}$ 
is the image under $\mu$ of  
\[\frac{1}{d!}\cdot \begin{cases} x_{i_3j_3}\cdots x_{i_dj_d}\sum_{k=1}^nx_{ik}x_{sk}\  \mbox{ when }i\neq s
\\x_{i_3j_3}\cdots x_{i_dj_d}\sum_{k=1}^n(x_{ik}^2-x_{kl}^2)\ \mbox{ when }i=s\end{cases}, 
\] 
the restriction to $\uuu_d$ of the coordinate function 
of the map $\kappa$ corresponding to $E_{li}\otimes E_{ls}\otimes E_{i_3j_3}\otimes \cdots \otimes E_{i_dj_d}$ for $i\neq s$ is the image under $\mu$ of 
\[\frac{1}{d!}x_{i_3j_3}\cdots x_{i_dj_d}\sum_{k=1}^nx_{ki}x_{ks},\] 
and the remaining coordinate functions of 
$\kappa$ restrict to the zero map on $\uuu_d$. 
Since $\sum_{k=1}^nx_{ik}x_{sk}$, $\sum_{k=1}^nx_{ki}x_{ks}$ for $i\neq s$ and  
$\sum_{k=1}^n(x_{ik}^2-x_{kl}^2)$ belong to $Q$, this shows that 
the restriction to $\uuu_d$ of each of the coordinate functions of $\kappa$ is contained 
in $\mu(Q_d)$, and we are done. 
\end{proofofthm}

%%%%%%%%%%%%%%%%%%%%%%%%%%%%%%%%%%%%%%%%%
\section{Gr\"obner basis for the vanishing ideal of $\cone(G)$}\label{sec:groebner}

\subsection{Gr\"obner basis for $\ideal(\cone(\OR(3)))$} \label{subsec:groebner} 

A Gr\"obner basis of the vanishing ideal of  the cone of $\OR(3,\beta)$ with respect to the 
degree reverse lexicographic ordering of the monomials, induced by the ordering 
$x_{11}>x_{12}>x_{13}>x_{21}>x_{22}>x_{23}>x_{31}>x_{32}>x_{33}$ 
 of the variables is the following: 
 \begin{align*} 
 x_{12} x_{21} x_{31} - 2 x_{11} x_{22} x_{31} + x_{11} x_{21} x_{32}, 
 \\ x_{12} x_{21} x_{32} - 2 x_{11} x_{22} x_{32} - 2 x_{11} x_{21} x_{33}, 
 \\ x_{11} x_{23} x_{32} - x_{12} x_{21} x_{33}, 
 \\ x_{12} x_{23} x_{32} - 2 x_{12} x_{22} x_{33} - 2 x_{11} x_{23} x_{33}, 
 \\ x_{13} x_{23} x_{32} - 2 x_{13} x_{22} x_{33} + x_{12} x_{23} x_{33}, 
 \\ x_{12}^{2} + 2 x_{11} x_{13}, 
 \\ x_{13} x_{21} + x_{12} x_{22} + x_{11} x_{23}, 
 \\ x_{21}^{2} + 2 x_{11} x_{31}, 
 \\ x_{21} x_{22} + x_{12} x_{31} + x_{11} x_{32}, 
 \\ x_{22}^{2} - x_{13} x_{31} + x_{12} x_{32} - x_{11} x_{33}, 
 \\ x_{21} x_{23} - x_{12} x_{32}, 
 \\ x_{22} x_{23} + x_{13} x_{32} + x_{12} x_{33}, 
 \\  x_{23}^{2} + 2 x_{13} x_{33}, 
 \\ x_{23} x_{31} + x_{22} x_{32} + x_{21} x_{33}, 
 \\ x_{32}^{2} + 2 x_{31} x_{33}
 \end{align*}

 The leading monomials of the elements in the above Gr\"obner basis are 
 \begin{align*}x_{32}^2,\  x_{23}x_{31},\ x_{23}^2,\  x_{22}x_{23},\ x_{21}x_{23},\ x_{22}^2,\  x_{21}x_{22},\ x_{21}^2,\  x_{13}x_{21},\   x_{12}^2,\ 
\\  x_{13} x_{23} x_{32},\ x_{12} x_{23} x_{32},\ x_{11} x_{23} x_{32},\ x_{12} x_{21} x_{32},\ x_{12} x_{21} x_{31}.\end{align*}

 For a set $\{a,b,\dots\}$ of variables denote by $\langle a,b,\dots \rangle$ the set of monomials involvig only the variables $a,b,\dots$. 
   The standard monomials are the following: 
  \begin{align*} 
  \langle x_{11},x_{13},x_{31},x_{33}\rangle 
  \sqcup x_{12}\langle x_{11},x_{13},x_{31},x_{33}\rangle\sqcup  x_{22}\langle x_{11},x_{13},x_{31},x_{33}\rangle 
  \sqcup x_{32}\langle x_{11},x_{13},x_{31},x_{33}\rangle 
  \\  x_{12}x_{32}\langle x_{11},x_{13},x_{31},x_{33}\rangle \sqcup x_{12}x_{22}  \langle x_{11},x_{13},x_{31},x_{33}\rangle 
  \sqcup   x_{22}x_{32} \langle x_{11},x_{13},x_{31},x_{33}\rangle 
  \\ \sqcup x_{12}x_{22}x_{32} \langle x_{11},x_{13},x_{31},x_{33}\rangle 
  \sqcup x_{21} \langle x_{11},x_{31},x_{33}\rangle \sqcup x_{23}\langle x_{11},x_{13},x_{33}\rangle 
 \\  \sqcup x_{12}x_{23} \langle x_{11},x_{13},x_{33}\rangle
  \sqcup x_{21}x_{32}\langle x_{13},x_{31},x_{33}\rangle
\sqcup  x_{12}x_{21}\langle x_{11},x_{33}\rangle \sqcup x_{23}x_{32}\langle x_{33}\rangle 
  \end{align*}  
  A form of the Hilbert series $\mathrm{H}(\coor(\cone(G));t)$ reflecting the above list is 
\[H(\coor(\cone(G));t)=\frac{1+3t+3t^2+t^3}{(1-t)^4}+\frac{2t+2t^2}{(1-t)^3}+\frac{t^2}{(1-t)^2}+\frac{t^2}{1-t}.\]

 The quadratic elements in this Gr\"obner basis form a minimal generating set of the ideal  $\ideal(\cone(\OR(3))$, since    
 (up to our coordinate change) they form a $\mc$-vector space basis in the linear span of the  generators of $\ideal(\cone(\OR(3))$  
 given in Theorem~\ref{thm:ideal_generators}.  
  
 \begin{proposition} Regardless of the chosen term order in $\coor(\mc^{n\times n})$, the ideal $\mathcal{I}(\cone(\OR(3)))$ has no quadratic Gr\"obner basis. 
 \end{proposition}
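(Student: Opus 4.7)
The plan is to deduce this proposition directly from Proposition~\ref{prop:koszul} via the standard implication ``quadratic Gr\"obner basis $\Rightarrow$ Koszul''. More precisely, I will invoke the well-known theorem (due essentially to Fr\"oberg, and also following from Anick's resolution / the theory of PBW algebras) that if $I$ is a homogeneous ideal in a polynomial ring $R$ admitting a quadratic Gr\"obner basis with respect to \emph{some} admissible term order, then the quotient $R/I$ is a Koszul algebra. The standard argument is that the initial ideal $\mathrm{in}(I)$ is then a quadratic monomial ideal, so $R/\mathrm{in}(I)$ is Koszul (monomial quadratic quotients of polynomial rings are Koszul, as one can check by an explicit minimal free resolution, or by noting that they are PBW), and Koszulness is preserved under the flat deformation $R/I \rightsquigarrow R/\mathrm{in}(I)$ (since the Hilbert series are equal and Koszulness is detected by the Hilbert/Poincar\'e series identity used in the proof of Proposition~\ref{prop:koszul}).

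With this black-boxed, the proof is essentially one line: were $\ideal(\cone(\OR(3)))$ to admit a quadratic Gr\"obner basis with respect to any term order on $\coor(\mc^{n\times n})$, then $\coor(\cone(\OR(3)))=\coor(\mc^{n\times n})/\ideal(\cone(\OR(3)))$ would be Koszul, contradicting Proposition~\ref{prop:koszul}.

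There is essentially no obstacle here, only a question of presentation: I need to cite the ``quadratic Gr\"obner basis implies Koszul'' theorem with an appropriate reference (for instance Fr\"oberg's paper already cited in the proof of Proposition~\ref{prop:koszul}, or a textbook reference such as Polishchuk--Positselski's monograph on quadratic algebras). I will not reprove it. The only subtle point worth noting explicitly is that the conclusion is term-order-independent precisely because Proposition~\ref{prop:koszul} rules out Koszulness of $\coor(\cone(\OR(3)))$ unconditionally, whereas the Gr\"obner basis hypothesis is per term order; the implication ``quadratic Gr\"obner basis (in some order) $\Rightarrow$ Koszul'' is what bridges these. I will make this logical structure transparent in the two-line proof.
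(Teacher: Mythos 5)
Your proposal is correct and follows exactly the paper's own argument: assume a quadratic Gr\"obner basis exists for some term order, invoke the standard fact that this forces $\coor(\cone(\OR(3)))$ to be Koszul (the paper cites Fr\"oberg and Anick for this, just as you do), and contradict Proposition~\ref{prop:koszul}. No differences worth noting.
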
 
 
 \begin{proof} Suppose to the contrary that there is a term order such that   $\mathcal{I}(\cone(\OR(3)))$ has a quadratic Gr\"obner basis. 
 Then the graded algebra $\coor(\cone(\OR(3)))$ is Koszul, see \cite{froberg}, \cite{anick}.  This contradicts to 
 Proposition~\ref{prop:koszul}. 
 \end{proof} 
 
 \subsection{Gr\"obner basis for $\ideal(\cone(\SP(4)))$}

We do not know if $\coor(\cone(G))$ for $G=\OR(4)$ or $G=\SP(4)$ is Koszul or not. 
In contrast with $\coor(\cone(\OR(3)))$, for these groups $G$  the first $50$ coefficients of $1/\mathrm{H}(\coor(\cone(G));-t)$ are non-negative. 
On the other hand, the quadratic generators for $\ideal(\cone(G))$ do not constitute a Gr\"obner basis with respect to the 
degree reverse lexicographic monomial ordering induced by the ordering 
$x_{11}>x_{12}>x_{13}>x_{14}>x_{21}>x_{22}>x_{23}>x_{24}>x_{31}>x_{32}>x_{33}>x_{34}>x_{41}>x_{42}>x_{43}>x_{44}$ 
of the variables. 
For example, the Gr\"obner basis for $\ideal(\cone(\SP(4))$ has two cubic elements, in addition to $10$ quadratic elements: 
\begin{align*} 
x_{11} x_{32} x_{41} - x_{14} x_{33} x_{41} + x_{13} x_{34} x_{41} - x_{11} x_{31} x_{42} + x_{14} x_{31} x_{43} - x_{13} x_{31} x_{44}, 
\\ x_{21} x_{32} x_{41} - x_{24} x_{33} x_{41} + x_{23} x_{34} x_{41} - x_{21} x_{31} x_{42} + x_{24} x_{31} x_{43} - x_{23} x_{31} x_{44}, 
\\ x_{12} x_{21} - x_{11} x_{22} - x_{34} x_{43} + x_{33} x_{44}, \quad x_{13} x_{21} - x_{11} x_{23} + x_{33} x_{41} - x_{31} x_{43}, 
\\ x_{14} x_{21} - x_{11} x_{24} + x_{34} x_{41} - x_{31} x_{44}, \quad x_{13} x_{22} - x_{12} x_{23} + x_{33} x_{42} - x_{32} x_{43}, 
\\ x_{14} x_{22} - x_{12} x_{24} + x_{34} x_{42} - x_{32} x_{44}, \quad x_{14} x_{23} - x_{13} x_{24} - x_{32} x_{41} + x_{31} x_{42}, 
\\ x_{12} x_{31} - x_{11} x_{32} + x_{14} x_{33} - x_{13} x_{34}, \quad x_{22} x_{31} - x_{21} x_{32} + x_{24} x_{33} - x_{23} x_{34}, 
\\ x_{12} x_{41} - x_{11} x_{42} + x_{14} x_{43} - x_{13} x_{44}, \quad x_{22} x_{41} - x_{21} x_{42} + x_{24} x_{43} - x_{23} x_{44}
\end{align*}

%%%%%%%%%%%%%%%%%%%%%%%%%%%%%%%%%%%%%%%%%%%%

\end{document}